\newtheorem{theorem}{Theorem}
\newtheorem{lemma}{Lemma}
\newtheorem{corollary}{Corollary}
\newcommand{\calX}{\mathcal{X}}
\newcommand{\Gcal}{\mathcal{G}}
\newcommand{\Vcal}{\mathcal{V}}
\newcommand{\Ecal}{\mathcal{E}}
\newcommand{\DX}{\mathcal{D}_\mathcal{X}}
\newcommand{\bfone}{\mathbf{1}}
\newcommand{\taubar}{\bar{\tau}}
\DeclareMathOperator{\Ex}{\mathbb{E}}
\DeclareMathOperator{\prob}{\mathrm{P}}
\DeclareMathOperator{\proj}{\Pi}
\def\argmin{\mathop{\rm argmin}}
\def\minimize{\mathop{\rm minimize}}
\title{Decentralized Consensus Algorithm with Delayed and Stochastic Gradients}
\author{
Benjamin Sirb \and Xiaojing Ye\thanks{Department of Mathematics \& Statistics,
Georgia State University, Atlanta, GA 30303, USA
(\url{bsirb1@student.gsu.edu}, \url{xye@gsu.edu}).
This work was partially supported by National Science Foundation under grants
DMS-1620342 and CMMI-1745382.}}
\date{}
\begin{document}
\maketitle

\begin{abstract}
We analyze the convergence of decentralized consensus
algorithm with delayed gradient information across the network. 
The nodes in the network privately hold parts of the objective 
function and collaboratively solve for the consensus
optimal solution
of the total objective while they can only communicate with their immediate neighbors.
In real-world networks, it is often difficult and sometimes impossible to 
synchronize the nodes, and therefore they have to use stale gradient information 
during computations.
We show that, as long as the random delays are bounded in expectation
and a proper diminishing step size policy is employed,
the iterates generated by decentralized gradient descent method 
converge to a consensual optimal solution.
Convergence rates of both objective and consensus are derived.
Numerical results on a number of synthetic problems and real-world seismic tomography datasets
in decentralized sensor networks are presented to show the performance of the method.
\bigskip

\noindent
\textbf{Key words.} Decentralized consensus, delayed gradient, stochastic gradient, decentralized networks.
\bigskip

\noindent
\textbf{AMS  subject classifications.} 65K05, 90C25, 65Y05.
\end{abstract}

%

\section{Introduction}
In this paper, we consider a decentralized consensus optimization problem arising
from emerging technologies such as distributed machine learning 
\cite{Boyd:2011a,Forero:2010a,Kraska:2013a,Li:2014a}, 
sensor network \cite{Iutzeler:2012a,Rabbat:2004a,Song:2009a}, 
and smart grid \cite{Gan:2013a,Lo:2013a}. Let $\mathcal{G}=(\mathcal{V},\mathcal{E})$
be a network (undirected graph) where
$\mathcal{V}=\{1,2,\dots,m\}$ is the node (also called agent, processor, or sensor) 
set and $\mathcal{E}\subset \mathcal{V} \times \mathcal{V}$ is the edge set. Two nodes $i$ and $j$
are called neighbors if $(i,j)\in \mathcal{E}$. The communications between neighbor nodes are 
bidirectional, meaning that $i$ and $j$ can communicate with each other as long
as $(i,j)\in \mathcal{E}$.

In a decentralized sensor network $\mathcal{G}$, individual nodes can acquire, store, and process data
about large-sized objects.
Each node $i$ collects data and holds objective function $F_i(x;\xi_i)$ privately where $\xi_i\in\Theta$
is random with fixed but unknown probability distribution in domain $\Theta$
to model environmental fluctuations such as noise in data acquisition and/or inaccurate estimation of objective function or its gradient. 
Here $x\in X$ is the unknown (e.g., the seismic image) to be solved,
where the domain $X\subset\mathbb{R}^n$ is compact and convex. 
Furthermore, we assume that $F_i(\cdot;\xi_i)$ is convex for all $\xi_i\in\Theta$ and
$i\in \mathcal{V}$, and we define $f_i(x)=\Ex_{\xi_i}[F_i(x;\xi_i)]$ which is thus convex with respect to $x\in X$.
The goal of decentralized consensus optimization is to solve the minimization problem
\begin{equation}\label{eqn:deccon}
\minimize_{x \in X} f(x),\quad \mbox{where} \ f(x):=\sum_{i=1}^m f_i(x)
\end{equation}
with the restrictions that $F_i(x;\xi_i)$, and hence $f_i(x)$, are accessible by node $i$ only,
and that nodes $i$ and $j$ can communicate only if $(i,j)\in \mathcal{E}$
during the entire computation.

There are a number of practical issues that need to be taken into consideration 
in solving the real-world decentralized consensus optimization problem \eqref{eqn:deccon}:
\begin{itemize}
\item The partial objective $F_i$ (and $f_i$) is held privately by
node $i$, and transferring $F_i$ to a data fusion center is either infeasible or 
cost-ineffective due to data privacy, the large size of $F_i$, and/or 
limited bandwidth and communication power overhead of sensors. 
Therefore, the nodes can only communicate their own estimates of $x\in\mathbb{R}^n$ with their neighbors
in each iteration of a decentralized consensus algorithm.

\item Since it is often difficult and sometimes impossible for the nodes to be
fully synchronized, they may not have access to the most up-to-date
(stochastic) gradient information during computations. In this case, the
node $i$ has to use out-of-date (stochastic) gradient $\nabla F_i(x_i(t-\tau_i(t));\xi_i(t-\tau_i(t)))$
where $x_i(t)$ is the estimate of $x$ obtained by node $i$ at iteration $t$,
and $\tau_i(t)$ is the level of (possibly random) delay of the gradient information at $t$.

\item The estimates $\{x_i(t)\}$ by the nodes should tend to be consensual as $t$ increases,
and the consensual value is a solution of problem \eqref{eqn:deccon}. In this case, there is
a guarantee of retrieving a good estimate of $x$ from any surviving node in the network 
even if some nodes are sabotaged, lost, or run out of power during the computation process.
\end{itemize}

In this paper, we analyze a decentralized consensus algorithm which takes all the factors above
into consideration in solving \eqref{eqn:deccon}. We provide comprehensive convergence
analysis of the algorithm, including the decay rates of objective function and disagreements
between nodes, in terms of iteration number, level of delays, and network structure etc.

\subsection{Related work}


Distributed computing on networks is an emerging technology with extensive applications in 
modern machine learning \cite{Forero:2010a,Kraska:2013a,Li:2014a},
sensor networks \cite{Iutzeler:2012a,Rabbat:2004a,Zhao:2015b,Zhao:2015a}, and big data analysis \cite{Cevher:2014b,Sayed:2014a}.
There are two types of scenarios in distributed computing: centralized and decentralized.
In the centralized scenario, computations are carried out locally by worker (slave) nodes while 
computations of certain global variables must eventually be processed by designated master node 
or at a center of shared memory during each (outer) iteration. 
A major effort in this scenario has been devoted to update the global variable more effectively
using an asynchronous setting in, for example, distributed centralized alternating direction method
of multipliers (ADMM) \cite{Chang:2016a,Chang:2016b,Liu:2015a,Wei:2013b,Zhang:2014b}.
In the decentralized scenario considered in this paper, the nodes 
privately hold parts of objective functions and can only communicate with neighbor
nodes during computations. In many real-world applications, decentralized computing is particularly useful when 
a master-worker network setting is either infeasible or not economical, or the data acquisition
and computation have to be carried out by individual nodes which then need to collaboratively
solve the optimization problem. Decentralized networks are also more robust to node 
failure and can better address privacy concerns.
For more discussions about motivations and advantages of decentralized computing,
see, e.g., \cite{Jakovetic:2014a,Nedic:2009b,Olfati-Saber:2004a,Shi:2015a,Tian:2008a,Tsitsiklis:1984a} and
references therein.


Decentralized consensus algorithms take the data distribution and communication restriction 
into consideration, so that they can be implemented at individual nodes in the network. 
In the \textit{ideal synchronous case} of decentralized consensus where all the nodes
are coordinated to finish computation and 
then start to exchange information with neighbors in each iteration, a number of 
developments have been made. A class of methods is to rewrite the consensus constraints for
minimization problem \eqref{eqn:deccon} by introducing auxiliary variables between
neighbor nodes (i.e., edges),
and apply ADMM (possibly with linearization or preconditioning techniques) to derive an implementable
decentralized consensus algorithm 
\cite{Chang:2015a,Iutzeler:2016a,Jakovetic:2015a,Makhdoumi:2017a,Shi:2014a,Yuan:2016a}.
Most of these methods require each node to solve a local optimization problem 
every iteration before communication, and reach a convergence rate of $O(1/T)$
in terms of outer iteration (communication) number $T$ for general convex objective functions $\{f_i\}$. 
First-order methods based on decentralized gradient
descent require less computational cost
at individual nodes such that between two communications 
they only perform one step of a gradient descent-type update at the weighted average of previous iterates obtained from neighbors.
In particular, Nesterov's optimal gradient scheme 
is employed in decentralized gradient descent with diminishing step sizes to achieve 
rate of $O(1/T)$ in \cite{Jakovetic:2014a}, where an alternative gradient method
that requires excessive communications in each inner iteration is also developed
and can reach a theoretical convergence rate of $O(\log T/T^2)$, despite that
it seems to work less efficiently in terms of communications than the former in practice. 
A correction technique is developed for decentralized gradient 
descent with convergence rate as $O(1/T)$ with constant step size in \cite{Shi:2015a}, 
which results in a saddle-point algorithm as pointed out in \cite{Mokhtari:2015b}.
In \cite{Zhao:2015a}, the authors combine Nesterov's gradient scheme
and a multiplier-type auxiliary variable to obtain a fast optimality convergence rate of $O(1/T^2)$.
Other first-order decentralized methods have also been developed recently, such dual averaging \cite{Duchi:2012b}.
Additional constraints for primal variables in decentralized consensus optimization \eqref{eqn:deccon}
are considered in \cite{Yuan:2015a}.

In real-world decentralized computing, it is often difficult and sometimes impossible to coordinate all the nodes
in the network such that their computation and communication are perfectly synchronized.
One practical approach for such \textit{asynchronous consensus}
is using a broadcast scenario where in each (outer) iteration, one node in the network 
is assumed to wake up at random and broadcasts its value to neighbors
(but does not hear them back). A number of algorithms for broadcast consensus 
are developed, for instance, in \cite{Aysal:2009a,Iutzeler:2012a,Nedic:2015a,Nedic:2016a}.
{In particular, \cite{Nedic:2016a} develops a consensus optimization algorithm for \eqref{eqn:deccon} in
the setting where every iteration one node in the network broadcasts its value to the neighbors,
but there are no delays in (sub)gradients during their updates.}
Another important issue in the asynchronous setting is that nodes may have to use out-of-date
(stale) gradient
information during updates \cite{Nedic:2009b,Wu:2016a}.
This delayed scenario in gradient descent is considered in a distributed but not decentralized setting
in \cite{Agarwal:2011a,Li:2013b,Sra:2016a,Zhang:2016a}.
In addition, analysis of stochastic gradient in distributed computing is also 
carried out in \cite{Agarwal:2011a,Shamir:2014a}.
In \cite{Feyzmahdavian:2014a}, linear convergence rate of optimality is derived for strongly convex
objective functions with delays. Extending \cite{Agarwal:2011a}, a \textit{fixed} delay at all nodes 
is considered in dual averaging \cite{Li:2015a} and gradient descent \cite{Wang:2015a}
in a decentralized setting, but they
did not consider more practical and useful \textit{random} delays,
and there are no convergence rates on node consensus provided in these papers. 
In \cite{Wu:2016a}, both random delays in communications and gradients are considered,
however, no convergence rate is established in such setting.

\subsection{Contributions}
The contribution of this paper is in three phases. 

First, we consider a general decentralized consensus
algorithm with randomly delayed and stochastic gradient (Section \ref{sec:algorithm}). 
In this case, the nodes do not need to be synchronized
and they may only have access to stale gradient information. This renders stochastic gradients
with random delays at different nodes in their gradient updates,
which is suitable for many real-world decentralized computing applications.

Second, we provide a comprehensive convergence analysis of the proposed algorithm (Section \ref{sec:convergence}). 
More precisely, we derive convergence rates for both the objective function (optimality) 
and disagreement (feasibility constraint of consensus), and show
their dependency on the characteristics of the problem, such as Lipschitz constants of 
(stochastic) gradients and spectral gaps
of the underlying network. 

Third, we conduct a number of numerical experiments on synthetic and real datasets
to validate the performance of the proposed algorithm (Section \ref{sec:experiment}). In particular, we examine the
convergence on synthetic decentralized least squares, robust least squares, and logistic regression problems.
We also present the numerical results on the reconstruction of several seismic images in
decentralized wireless sensor networks.

\subsection{Notations and assumptions}\label{subsec:notation}
In this paper, all vectors are column vectors unless otherwise noted. We denote by $x_i(t)\in\mathbb{R}^n$ the
estimate of node $i$ at iteration $t$, and $x(t)=(x_1(t),\dots,x_m(t))^{\top}\in\mathbb{R}^{m\times n}$.
We denote $\|x\|\equiv\|x\|_2$ if $x$ is a vector and $\|x\|\equiv\|x\|_F$ if $x$ is a matrix,
which should be clear by the context.
For any two vectors of same dimension, $\langle x,y\rangle$ denotes their inner product,
and $\langle x,y\rangle_{Q}:=\langle x,Qy\rangle$ for symmetric positive semidefinite matrix $Q$.
For notation simplicity, we use $\langle x,y\rangle=\sum_{i=1}^{m}\langle x_i,y_i\rangle$
where $x_i$ and $y_i$ are the $i$-th row of the $m\times n$ matrices $x$ and $y$ respectively.
Such matrix inner product is also generalized to $\langle x,y\rangle_Q$ for matrices $x$ and $y$.
In this paper, we set the domain $X:=\{x\in\mathbb{R}^n:\|x\|_\infty\leq R\}$ for some $R>0$,
which can be thought of as the maximum pixel intensity in reconstructed images for instance.
We further denote $\calX := X^m\subset \mathbb{R}^{m\times n}$.

For each node $i$, we define $f_i(x):=\Ex_{\xi_i}[F_i(x;\xi_i)]$ as the expectation of objective function,
and $g_i(t):=\nabla F_i(x(t);\xi_i(t))$ (here the gradient $\nabla$ is taken with respect to $x$) 
is the stochastic gradient at $x_i(t)$ at node $i$.
We let $\tau_i(t)$ be the delay of gradient at node $i$ in iteration $t$, and $\tau(t)=(\tau_1(t),\dots,\tau_m(t))^{\top}$.
We write $f(x(t))$ in short for $\sum_{i=1}^{m}f_i(x_i(t))\in\mathbb{R}$,
$x(t-\tau(t))$ for $(x_1(t-\tau_1(t)),\dots,x_m(t-\tau_m(t)))^{\top}\in\mathbb{R}^{m\times n}$,
and
$g(t-\tau(t))$ for $(g_1(t-\tau_1(t)),\dots,g_m(t-\tau_m(t)))^{\top}\in\mathbb{R}^{m\times n}$.
We assume $f_i$ is continuously differentiable, $\nabla f_i$ has Lipschitz constant $L_i$,
and denote $L:=\max_{1\leq i\leq m}L_i$. 

Let $x^*\in\mathbb{R}^n$ be a solution of \eqref{eqn:deccon}, we
denote $\bfone(x^*)^{\top}$ simply by $x^*$ in this paper which is clear by the context,
for instance $f(x^*)=f(\bfone (x^*)^{\top})=\sum_{i=1}^{m}f_i(x^*)$.
Furthermore, we let $y(T):=(1/T) \sum_{t=1}^{T}x(t+1)$ be the running average of $\{x(t+1):1\leq t\leq T\}$,
and $z(T):=(1/m)\sum_{i=1}^m y(T)$ be the consensus average of $y(T)$.
We denote $J=(1/m)\bfone\bfone^{\top}$, then $z(T)=Jy(T)$.
Note that for all $T$, $z(T)$ is always consensual but $x(T), y(T)$ may not be.


An important ingredient in decentralized gradient descent is the mixing matrix $W=[w_{ij}]$
in \eqref{eqn:algupd_node}. For the algorithm to be implementable in practice,
$w_{ij}>0$ if and only if $(i,j)\in \mathcal{E}$. In this paper, we assume that $W$ is
symmetric and $\sum_{j=1}^{m} w_{ij}=1$ for all $i$, hence $W$ is doubly
stochastic, namely $W\bfone=\bfone$ and $\bfone^{\top}W=\bfone^{\top}$ where $\bfone=(1,\dots,1)^{\top}\in\mathbb{R}^m$. 
With the assumption that the network $\mathcal{G}$ is simple and connected,
we know $\|W\|_2=1$ and eigenvalue $1$ of $W$ has multiplicity $1$
by the Perron-Frobenius theorem \cite{Lovasz:1993a}.
As a consequence, $Wx=x$ if and only if $x$ is consensual, i.e., $x=c\bfone$ for some $c\in\mathbb{R}$.
We further assume $W\succeq 0$ (otherwise use $\frac{1}{2}(I+W)\succeq 0$ since stochastic matrix $W$ has spectral radius 1).
Given a network $\mathcal{G}$, there are different ways to design the mixing matrix $W$. For some optimal choices of $W$,
see, e.g., \cite{Sayed:2013b,Xiao:2004a}.

Now we make several assumptions that are necessary in our convergence analysis.
\begin{enumerate}
\item The network $\mathcal{G}=(\mathcal{V},\mathcal{E})$ is undirected, simple, and connected.
\item For all $i$ and $x$, the stochastic gradient is unbiased, i.e., $\Ex_{\xi_i}[\nabla F_i(x;\xi_i)]=\nabla f_i(x)$,
and $\Ex_{\xi_i}[\|\nabla F_i(x;\xi_i)-\nabla f_i(x)\|^2]\leq \sigma^2$ for some $\sigma>0$.
\item The delays $\tau_i(t)$ may follow different distributions at different nodes, but their second moments
are assumed to be uniformly bounded, i.e., there exists $B>0$ such that 
$\Ex[\tau_i(t)^2]\leq B^2$ for all $i=1,\dots,m$ and iteration $t$.
\end{enumerate}
Since the domain $X$ is compact and $\nabla f_i$ are all Lipschitz continuous,
we know $\|\nabla f_i\|$ is uniformly bounded.
Furthermore, $\Ex[\|\nabla F_i(\cdot,\xi_i)\|] \leq \Ex[\|\nabla F_i(\cdot,\xi_i)-\nabla f_i(\cdot)\|] + \|\nabla f_i(\cdot)\|
\leq \sigma + \|\nabla f_i(\cdot)\|$, we know $\Ex[\|\nabla F_i(\cdot,\xi_i)\|]$ is also uniformly bounded.
Therefore, we denote by $G>0$ the uniform bound such that
$\|\nabla f_i\|, \Ex[\|\nabla F_i(\cdot,\xi_i)\|] \leq G$ for all $i$.
We also assume that the random delay $\tau_i(t)$ and error of inexact gradient
$\epsilon_i(t):=g_i(t)-\nabla f_i(x(t))$ are independent.

%
%

\section{Algorithm}\label{sec:algorithm}
Taking the delayed stochastic gradient and the constraint that nodes can only communicate
with immediate neighbors, we propose the following decentralized delayed stochastic 
gradient descent method for solving \eqref{eqn:deccon}. Starting from an initial 
guess $\{x_i(0):i=1,\dots,m\}$, each node $i$ performs the following updates iteratively:
\begin{equation}\label{eqn:algupd_node}
x_i(t+1)=\Pi_X \sbr[3]{\sum_{j=1}^m w_{ij}x_j(t)-\alpha(t)g_i(t-\tau_i(t))}.
\end{equation}
Namely, in each iteration $t$, the nodes exchange their most recent $x_i(t)$ with
their neighbors. Then each node takes weighted average of the received local copies 
using weights $w_{ij}$ and performs a gradient descent type update using a stochastic
gradient $g_i(t-\tau_i(t))$ with delay $\tau_i(t)$ and step size $\alpha(t)$, and projects the result onto $X$.
In addition, each node $i$ tracks its own running average $y_i(t)=(1/t)\cdot\sum_{s=1}^t x_i(s+1)$
by simply updating $y_i(t)=(1-1/t)\cdot y_i(t-1)+(1/t)\cdot x_i(t+1)$ in iteration $t$.


Following the matrix notation in Section \ref{subsec:notation}, the iteration \eqref{eqn:algupd_node} can be written as
\begin{equation}\label{eqn:algupd}
x(t+1)=\proj_{\calX}[Wx(t)-\alpha(t) g(t-\tau(t))].
\end{equation}
%
Here the projection $\proj_\calX$ is accomplished by each node projecting to $X$
due to the definition of $X$ in Section \ref{subsec:notation},
which does not require any coordination between nodes.
Note that the update \eqref{eqn:algupd} is also equivalent to
\begin{equation}\label{eqn:alg}
x(t+1)= \argmin_{x \in \calX} \left\{ \langle g(t-\tau(t)),x\rangle + \frac{1}{2\alpha(t)}\|x-Wx(t)\|^2\right\}.
\end{equation}
In this paper, we may refer to the proposed decentralized delayed stochastic gradient descent 
algorithm by any of \eqref{eqn:algupd_node}, \eqref{eqn:algupd}, and \eqref{eqn:alg}
since they are equivalent.

\section{Convergence Analysis}\label{sec:convergence}
In this section, we provide a comprehensive convergence analysis of the
proposed algorithm \eqref{eqn:alg} by employing a proper step size policy. 
In particular, we derive convergence rates for both of
the disagreement (Theorem \ref{thm:cssrate}) and objective function value (Theorem \ref{thm:z_opt}).

\begin{lemma}\label{lemma:proj}
For any $x\in\mathbb{R}^{m\times n}$, its projection onto $\calX$ yields nonincreasing disagreement. That is
\begin{equation}
\|(I-J)\proj_{\calX}(x)\|\leq \|(I-J)x\|.
\end{equation}
\end{lemma}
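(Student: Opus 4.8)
The plan is to exploit the fact that the matrix $J = (1/m)\mathbf{1}\mathbf{1}^{\top}$ projects each column of an $m\times n$ matrix onto its mean, so $(I-J)x$ records the column-wise deviations from consensus, and that the projection $\proj_{\calX}$ acts separately on each row (node) because $\calX = X^m$ with $X = \{x\in\mathbb{R}^n : \|x\|_\infty \le R\}$ a box. The key geometric observation is that $X$ is convex and \emph{translation/permutation-symmetric enough} that projecting every row onto the same convex set can only shrink the spread of the rows. Concretely, write $\bar x = Jx$ (the consensus matrix whose every row equals the column-mean of $x$), so that $\|(I-J)x\|^2 = \sum_{i=1}^m \|x_i - \bar x_1\|^2$ where $\bar x_1$ is the common row of $\bar x$. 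I want to compare $\sum_i \|\proj_X(x_i) - c\|^2$ for a suitable reference point $c$ against $\sum_i \|x_i - \bar x_1\|^2$.

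First I would recall the standard nonexpansiveness of Euclidean projection onto a convex set: for the box $X$, $\|\proj_X(u) - \proj_X(v)\| \le \|u-v\|$ for all $u,v$. The subtlety is that the mean of the projected rows, $\frac{1}{m}\sum_i \proj_X(x_i)$, is generally \emph{not} equal to $\proj_X(\bar x_1)$, so I cannot directly pair row $i$'s projection against $\proj_X(\bar x_1)$ and sum. To handle this I would use the variational characterization $\|(I-J)y\|^2 = \min_{c\in\mathbb{R}^n} \sum_i \|y_i - c\|^2$, i.e. the mean minimizes the sum of squared distances. Applying this with $y = \proj_{\calX}(x)$ and the sub-optimal choice $c = \proj_X(\bar x_1)$ gives
\begin{equation*}
\|(I-J)\proj_{\calX}(x)\|^2 \;\le\; \sum_{i=1}^m \|\proj_X(x_i) - \proj_X(\bar x_1)\|^2 \;\le\; \sum_{i=1}^m \|x_i - \bar x_1\|^2 \;=\; \|(I-J)x\|^2,
\end{equation*}
where the middle inequality is row-wise nonexpansiveness of $\proj_X$. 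Taking square roots finishes the argument.

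The main obstacle is precisely the step I just flagged: one must resist the temptation to claim $J\proj_{\calX}(x) = \proj_{\calX}(Jx)$ (false in general), and instead insert the variational/"mean is the best constant" inequality to bound the disagreement of the projected iterate by the sum of squared row distances to an arbitrary common point. Once that is in place, everything else is the textbook fact that projection onto a convex set is $1$-Lipschitz, applied row by row, which is legitimate because $\calX$ is a Cartesian product and $\proj_{\calX}$ decouples across rows. I would also note in passing that the same proof works for any common closed convex constraint set $X$, not just the box, so the box structure is used only to justify that $\proj_\calX$ is applied node-wise without coordination, as already observed after \eqref{eqn:algupd}.
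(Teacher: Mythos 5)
Your proof is correct, and it takes a genuinely different and in fact more general route than the paper's. The paper reduces to one scalar coordinate at a time, then argues directly on the box: it splits the entries into those below $-R$ and the rest, expands $\sum_i (x_i-\overline{x})^2$ around the two group means $\mu_1,\mu_2$, and checks term by term that clamping the first group to $-R$ decreases the sum (then repeats for the entries above $R$). That argument is elementary but computational and tied to the specific structure of $X$ as an $\ell_\infty$ ball. You instead combine two standard facts: the variational characterization $\|(I-J)y\|^2=\min_{c}\sum_i\|y_i-c\|^2$ (which correctly sidesteps the false identity $J\proj_{\calX}(x)=\proj_{\calX}(Jx)$ by allowing the suboptimal reference point $c=\proj_X(\overline{x})$), and the $1$-Lipschitz property of Euclidean projection onto a convex set, applied row by row since $\calX=X^m$ decouples. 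The chain
\begin{equation*}
\|(I-J)\proj_{\calX}(x)\|^2 \le \sum_{i=1}^m \|\proj_X(x_i)-\proj_X(\overline{x})\|^2 \le \sum_{i=1}^m\|x_i-\overline{x}\|^2=\|(I-J)x\|^2
\end{equation*}
is airtight. What your approach buys is brevity and generality: the lemma holds verbatim for any common closed convex constraint set $X$, so the box enters only to guarantee that $\proj_{\calX}$ can be computed node-wise without coordination. What the paper's approach buys is self-containedness at the level of bare algebra, with no appeal to the nonexpansiveness of convex projection.
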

\begin{proof}
See Appendix \ref{app:proof_proj}.
\end{proof}

\begin{lemma}\label{lemma:seqrate}
Let $c_1\geq0$ and $c_2>0$, and define $\alpha(t)=1/(c_1+c_2\sqrt{t})$.
Then for any $\lambda\in (0,1)$ there is
\begin{equation}
\sum_{s=0}^{t-1}\alpha(s)\lambda^{t-s-1}\leq \frac{\sqrt{\pi}\lambda^{-2}}{c_2\sqrt{t} \log(\lambda^{-1})}= O\left(\frac{1}{\sqrt{t}}\right)
\end{equation}
for all $t=1,2,\dots.$
\end{lemma}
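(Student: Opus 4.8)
The plan is to view the sum $\sum_{s=0}^{t-1}\alpha(s)\lambda^{t-s-1}$ as a geometric series with ratio $\lambda$ whose coefficients $\alpha(s)$ vary slowly: the factor $\lambda^{t-s-1}$ decays geometrically as $s$ drops below $t$, so the sum is dominated by its topmost terms, where $\alpha(s)\approx\alpha(t)=O(1/(c_2\sqrt t))$, and summing the geometric tail produces a factor $1/(1-\lambda)$. To make this precise I would first bound $\alpha(s)\le 1/(c_2\sqrt s)$ for $s\ge1$ (the lone $s=0$ term equals $\alpha(0)\lambda^{t-1}$, a fixed multiple of $\lambda^{t-1}$ when $c_1>0$ and read as $0$ when $c_1=0$, so the sum is effectively over $s\ge1$), and reindex by $k=t-1-s$ to get
\[
\sum_{s=1}^{t-1}\alpha(s)\lambda^{t-s-1}\ \le\ \frac1{c_2}\sum_{k=0}^{t-2}\frac{\lambda^{k}}{\sqrt{t-1-k}}.
\]

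Next I would split the range at a threshold $k_0$ of size roughly $t/2$. On $k\le k_0$ one has $t-1-k\ge t/2$, so $1/\sqrt{t-1-k}\le\sqrt2/\sqrt t$, and extending the geometric sum to all $k\ge0$ bounds this part by $\sqrt2/(c_2\sqrt t\,(1-\lambda))$. On $k>k_0$ one has $\lambda^{k}\le\lambda^{k_0}$, exponentially small in $t$, and multiplying by the remaining $\sum 1/\sqrt{t-1-k}=O(\sqrt t)$ makes this part $O(\sqrt t\,\lambda^{t/2})=o(1/\sqrt t)$. (Alternatively one can compare the whole sum with $\int_0^t\lambda^{t-1-x}/(c_2\sqrt x)\,dx$, legitimate up to a boundary correction since the integrand is unimodal, and evaluate it via $x=t-y$; this makes $\int_0^\infty\lambda^{y}\,dy=1/\log(\lambda^{-1})$ appear directly.)

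The last ingredient turns $1/(1-\lambda)$ into $1/\log(\lambda^{-1})$: applying $\log u\le u-1$ at $u=\lambda^{-1}$ gives $1-\lambda\ge\lambda\log(\lambda^{-1})$, hence $1/(1-\lambda)\le\lambda^{-1}/\log(\lambda^{-1})$. The dominant term is then $C\lambda^{-1}/(c_2\sqrt t\,\log(\lambda^{-1}))$, and the remaining slack in the claimed bound — an extra factor $\lambda^{-1}\ge1$, yielding $\lambda^{-2}$, together with the constant $\sqrt\pi>\sqrt2$ — is exactly what one spends to absorb the split error, the discrepancy between $\sqrt{t-1}$ and $\sqrt t$, and the exponentially small leftover, using that $\sqrt t\,\lambda^{ct}$ is uniformly bounded over $t\ge1$ (its maximum over $t$ being of order $1/\sqrt{\log(\lambda^{-1})}$).

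I expect the only genuine obstacle to be this final round of constant-chasing: collapsing the pieces into one clean closed form valid for \emph{every} $t\ge1$, handling the small-$t$ cases, and checking that $\sqrt\pi$ and $\lambda^{-2}$ really dominate all error terms uniformly in $\lambda\in(0,1)$. The conceptual content — a slowly varying geometric series sums to its top term divided by $1-\lambda$ — is routine.
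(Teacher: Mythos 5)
Your argument delivers the right order $O(1/\sqrt{t})$, but the step you defer as ``routine constant-chasing'' is exactly where the proof breaks: the split-and-absorb strategy cannot recover the specific constant $\sqrt{\pi}\lambda^{-2}$ claimed in the lemma (which is used verbatim downstream in Theorem \ref{thm:cssrate} and Corollary \ref{cor:adjrate}). The dangerous regime is $\lambda$ close to $1$ with $t\log(\lambda^{-1})$ of order one. There the slack you plan to spend evaporates ($\lambda^{-1}\to 1$, so upgrading $\lambda^{-1}$ to $\lambda^{-2}$ buys nothing), and the ``exponentially small'' tail $O(\sqrt{t}\,\lambda^{t/2})$ is \emph{not} small relative to the target $O(1/(\sqrt{t}\log\lambda^{-1}))$ --- it is comparable to the main term. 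Concretely, take $c_1=0$, $c_2=1$, $t=100$, $\lambda=e^{-0.03}$ (and drop the $s=0$ term as you propose): the claimed bound is $\sqrt{\pi}\,e^{0.06}/(10\cdot 0.03)\approx 6.27$, while your main part is bounded by $\tfrac{\sqrt{2}}{10}(1-\lambda)^{-1}\approx 4.79$ and your tail bound $\lambda^{50}\sum_{j\le 49}j^{-1/2}$ is about $2.8$--$3.1$, totalling roughly $7.6$--$7.9 > 6.27$, even though the actual sum is only about $4$--$5$. Moving the threshold $k_0$ does not help: raising it inflates the $\sqrt{t/(t-k_0)}$ factor in the main part past the budget before the tail becomes negligible (at $k_0=70$ the main part alone already exceeds $6.27$). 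So the chain of inequalities you outline genuinely overshoots the stated right-hand side.

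The missing idea is that $\sqrt{\pi}$ is not rounded-up slack; it is the Gaussian constant. The paper compares the sum (for $s\ge 2$) with $\int_1^{t-1}\lambda^{t-2-w}/(c_2\sqrt{w})\,dw$, substitutes $w=u^2$ to turn this into $\tfrac{2\lambda^{t-2}}{c_2}\int_1^{\sqrt{t-1}}\lambda^{-u^2}\,du$, and evaluates this Gaussian-type integral sharply by squaring and passing to polar coordinates (together with $\cos^{-2}\theta\le 1+4\theta/\pi$ on $[0,\pi/4]$), obtaining $\int_1^{\sqrt{t-1}}\lambda^{-u^2}du\le \sqrt{\pi}\lambda^{-t}/(2\sqrt{t}\log(\lambda^{-1}))$; multiplying back out gives exactly $\sqrt{\pi}\lambda^{-2}/(c_2\sqrt{t}\log(\lambda^{-1}))$ with no further losses. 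Your own parenthetical alternative (integral comparison plus $x=t-y$) leads to the same place if carried out exactly rather than freezing $1/\sqrt{t-y}$ at $1/\sqrt{t}$, so that is the route to take. Two smaller points: reading $\alpha(0)$ as $0$ when $c_1=0$ is not legitimate ($\alpha(0)$ is then undefined --- a defect of the lemma's statement that the paper's own proof also glosses over, along with the $s=0,1$ terms generally); and if all one needed were a bound of the form $C\lambda^{-2}/(c_2\sqrt{t}\log(\lambda^{-1}))$ for \emph{some} absolute constant $C$, your argument would suffice, at the price of changing every downstream constant in the paper.
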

\begin{proof}
See Appendix \ref{app:proof_seqrate}.
\end{proof}

Now we are ready to prove the convergence rate of disagreement 
in $x(t)$ and $y(t)$. In particular, we show that $(\sum_{i=1}^m \|x_i(t)-\bar{x}(t)\|^2)^{1/2}$
decays at the rate of $O(1/\sqrt{t})$, where $\bar{x}(t)=(1/m)\sum_{i=1}^m x_i(t)$.
The same convergence rate holds for the disagreement of running average $y(t)$.
More specifically, these convergence rates are given by the bounds 
in the following theorem.
\begin{theorem}\label{thm:cssrate}
Let $\{x(t)\}$ be the iterates generated by Algorithm 
\eqref{eqn:alg} with $\alpha(t)=[2(L+\eta\sqrt{t})]^{-1}$
for some $\eta>0$, 
and $\lambda=\|W-J\|$. Then $\lambda$ is the second largest eigenvalue of $W$
and hence $\lambda\in(0,1)$. Moreover, the disagreement of $x(t)$ is bounded by
\begin{equation}\label{eqn:cssrate}
\Ex [\|(I-J)x(t)\|]\leq \sqrt{m}G\sum_{s=0}^{t-1}\alpha(s)\lambda^{t-s-1} 
\leq \frac{\sqrt{\pi m}G\lambda^{-2}}{\eta\sqrt{t}\log(\lambda^{-1})}=O\left(\frac{1}{\sqrt{t}}\right),
\end{equation}
and the disagreement of running average $y(T)=(1/m)\sum_{t=1}^{T} x(t+1)$ is bounded by
\begin{equation}\label{eqn:Ycssrate}
\Ex[\|(I-J)y(T)\|] \leq \frac{2\sqrt{\pi m}G\lambda^{-2}}{\eta\sqrt{T}\log(\lambda^{-1})}=O\left(\frac{1}{\sqrt{T}}\right).
\end{equation}
\end{theorem}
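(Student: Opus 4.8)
The plan is to unroll the recursion \eqref{eqn:algupd} and track how disagreement propagates. Starting from $x(t+1)=\proj_{\calX}[Wx(t)-\alpha(t)g(t-\tau(t))]$, I would first apply Lemma \ref{lemma:proj} to discard the projection at the cost of an inequality, giving $\|(I-J)x(t+1)\|\leq \|(I-J)(Wx(t)-\alpha(t)g(t-\tau(t)))\|$. Then I would use the two key facts about $W$: that $JW=WJ=J$ (since $W$ is doubly stochastic), so $(I-J)W=(I-J)W(I-J)=(W-J)(I-J)$, and that $\|W-J\|_2=\lambda$ is the second largest eigenvalue of $W$, which lies in $(0,1)$ by the Perron--Frobenius theorem together with the connectedness assumption and $W\succeq 0$. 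This yields the one-step contraction estimate $\|(I-J)x(t+1)\|\leq \lambda\|(I-J)x(t)\|+\alpha(t)\|(I-J)g(t-\tau(t))\|$, where I also use $\|I-J\|_2=1$ on the gradient term.

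Next I would iterate this bound from the initial condition. Since we may assume $x(0)$ is consensual (or the contraction kills the initial disagreement exponentially anyway), the recursion telescopes to $\|(I-J)x(t)\|\leq \sum_{s=0}^{t-1}\lambda^{t-s-1}\alpha(s)\|(I-J)g(s-\tau(s))\|$. Taking expectations and using $\|(I-J)g(s-\tau(s))\|\leq \|g(s-\tau(s))\|$ together with the uniform bound $\Ex[\|\nabla F_i(\cdot;\xi_i)\|]\leq G$ (so that $\Ex[\|g(s-\tau(s))\|]\leq \sqrt{m}\,G$ after summing the $m$ rows and using Jensen or Cauchy--Schwarz), I get $\Ex[\|(I-J)x(t)\|]\leq \sqrt{m}\,G\sum_{s=0}^{t-1}\alpha(s)\lambda^{t-s-1}$. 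The second inequality in \eqref{eqn:cssrate} is then exactly Lemma \ref{lemma:seqrate} applied with $c_1=L$, $c_2=\eta$ (noting $\alpha(t)=[2(L+\eta\sqrt{t})]^{-1}=1/(2L+2\eta\sqrt{t})$, so the constants are $2L$ and $2\eta$, and the factor $2$ in the denominator only improves the bound — I should be slightly careful to state the constant matching the theorem, absorbing the factor appropriately).

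For the running-average bound \eqref{eqn:Ycssrate}, I would write $(I-J)y(T)=(1/T)\sum_{t=1}^{T}(I-J)x(t+1)$, apply the triangle inequality and linearity of expectation to get $\Ex[\|(I-J)y(T)\|]\leq (1/T)\sum_{t=1}^{T}\Ex[\|(I-J)x(t+1)\|]$, then plug in the per-iterate bound $O(1/\sqrt{t})$ from \eqref{eqn:cssrate}. Since $\sum_{t=1}^{T} t^{-1/2}\leq 2\sqrt{T}$, dividing by $T$ gives the claimed $O(1/\sqrt{T})$ with the stated constant (the factor $2$ in \eqref{eqn:Ycssrate} coming precisely from $\sum_{t=1}^T t^{-1/2}\le 2\sqrt T$).

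The main obstacle I anticipate is the very first step: justifying that the projection does not increase disagreement (Lemma \ref{lemma:proj}, which is invoked as a black box) and, more subtly, correctly handling the delayed stochastic gradient inside the expectation. Because $\tau_i(s)$ is random and the summand index $s-\tau_i(s)$ is itself random, I need the independence assumption between $\tau_i(t)$ and the gradient error $\epsilon_i(t)$, plus the fact that the bound $\Ex[\|\nabla F_i(\cdot;\xi_i)\|]\leq G$ holds \emph{uniformly in the argument}, so that conditioning on the delay and then taking expectation over $\xi_i$ still yields $G$ regardless of which past iterate the gradient was evaluated at. Once that uniformity is in hand, the delay essentially drops out of the disagreement estimate — it will matter for the optimality bound (Theorem \ref{thm:z_opt}) but not here — and the rest is the geometric-series computation packaged in Lemma \ref{lemma:seqrate}.
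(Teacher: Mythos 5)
Your proposal is correct and follows essentially the same route as the paper's proof: projection monotonicity (Lemma \ref{lemma:proj}), the identity $(I-J)W=(W-J)(I-J)$ with $\|W-J\|=\lambda$, the uniform gradient bound giving the $\sqrt{m}G$ factor, Lemma \ref{lemma:seqrate} for the geometric-weighted sum, and convexity plus $\sum_{t=1}^{T}t^{-1/2}\leq 2\sqrt{T}$ for the running average. The only cosmetic difference is that you unroll the recursion explicitly while the paper phrases the same computation as an induction on $t$ (with the base case implicitly using a consensual $x(0)$, as you note); your observation about the factor of $2$ from $c_2=2\eta$ only strengthening the stated bound is also accurate.
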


\begin{proof}
We first prove the bound on disagreement between $\{x_i(t):1\leq i\leq m\}$, i.e., 
\eqref{eqn:cssrate}, by induction. It is trivial to show 
that this bound holds for $t=1$. Assuming \eqref{eqn:cssrate} holds for $t$, we have
\begin{align}
\Ex[\|(I-J)x(t+1)\|] = &\ \Ex[\|(I-J)\proj_{\calX}(Wx(t)-\alpha(t)g(t-\tau(t)))\| ]\nonumber \\
\leq &\ \Ex[\|(I-J)(Wx(t)-\alpha(t)g(t-\tau(t)))\|]\\
\leq &\ \Ex[\|(I-J)Wx(t)\|]+\alpha(t)\Ex[\|(I-J)g(t-\tau(t))\|] \nonumber \\
\leq &\ \Ex[\|(I-J)Wx(t)\|]+\alpha(t)\sqrt{m}G \nonumber
\end{align}
where we used Lemma \ref{lemma:proj} in the first inequality,
and $\|I-J\|\leq1$ and $\Ex[\|g_i(t-\tau_i(t))\|]\leq G$ in the last inequality. Noting that
$J^2=J$ and $JW=WJ=J$, we have
\begin{equation*}
(W-J)(I-J)=(I-J)W.
\end{equation*}
Therefore, we obtain
\begin{align}
\Ex[\|(I-J)x(t+1)\|] \leq & \Ex[\|(I-J)Wx(t)\|]+\alpha(t)\sqrt{m}G \nonumber \\
= & \Ex[\|(W-J)(I-J)x(t)\|]+\alpha(t)\sqrt{m}G \nonumber \\
\leq & \Ex[\|(W-J)\|\|(I-J)x(t)\|]+\alpha(t)\sqrt{m}G \\
\leq & \lambda \sqrt{m}G \sum_{s=0}^{t-1}\alpha(s)\lambda^{t-s-1}+\alpha(t)\sqrt{m}G \nonumber \\
= & \sqrt{m}G\sum_{s=0}^{t}\alpha(s)\lambda^{t-s} \nonumber 
\end{align}
where we used the induction assumption for $t$ in the last inequality.
Applying Lemma \ref{lemma:seqrate} to the bound yields the second inequality in \eqref{eqn:cssrate},
which shows that $\Ex[\|(I-J)x(t)\|]$ decays at rate $O(1/\sqrt{t})$.

By convexity of $\|\cdot\|$ and definition of $y(T)$, we obtain that
\begin{equation}
\Ex[\|(I-J)y(T)\|]\leq \frac{1}{T}\sum_{t=1}^{T} \Ex[\|(I-J)x(t+1)\|]
\leq \frac{2\sqrt{\pi m}G\lambda^{-2}}{\eta\sqrt{T}\log(\lambda^{-1})}
\end{equation}
by applying \eqref{eqn:cssrate} and using $\sum_{t=1}^{T}\frac{1}{\sqrt{t}}\leq 2\sqrt{T}$.
Therefore the disagreement $\Ex[\|(I-J)y(T)\|]$ also decays at rate of $O(1/\sqrt{T})$.
\end{proof}

The convergence rate of disagreement also yields an estimate 
of differences between consecutive iterates $x(t)$ and $x(t+1)$, which is given by the following corollary.

\begin{corollary}\label{cor:adjrate}
Let $\{x(t)\}$ be the iterates generated by Algorithm \eqref{eqn:alg} with the settings of $\alpha(t)$,
$\lambda$, and $\eta$ same as in Theorem \ref{thm:cssrate}. Then there is
\begin{equation}\label{eqn:adjrate}
\Ex[\|x(t+1)-x(t)\|] \leq \frac{C}{\sqrt{t}},
\end{equation}
where $C:=\frac{\sqrt{m}G}{\eta}\sbr[1]{\frac{\sqrt{\pi} \lambda^{-2}}{\log(\lambda^{-1})}+\frac{1}{2}}$
is a constant independent of $t$.
\end{corollary}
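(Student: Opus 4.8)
The plan is to read off the bound directly from the nonexpansiveness of the Euclidean projection together with the disagreement estimate already established in Theorem~\ref{thm:cssrate}. The key preliminary observation is that for every $t\geq 1$ the iterate $x(t)$ lies in $\calX$, being itself the output of $\proj_\calX$, so $\proj_\calX(x(t))=x(t)$. Writing the update~\eqref{eqn:algupd} as $x(t+1)=\proj_\calX[Wx(t)-\alpha(t)g(t-\tau(t))]$ and using that $\proj_\calX$ is $1$-Lipschitz, one gets
\[
\|x(t+1)-x(t)\|=\bigl\|\proj_\calX[Wx(t)-\alpha(t)g(t-\tau(t))]-\proj_\calX[x(t)]\bigr\|\leq \bigl\|(W-I)x(t)-\alpha(t)g(t-\tau(t))\bigr\|,
\]
and the triangle inequality then splits the right-hand side into a mixing term $\|(W-I)x(t)\|$ and a gradient-step term $\alpha(t)\|g(t-\tau(t))\|$, to be bounded separately.

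For the mixing term, the point is that $W-I$ annihilates consensual vectors: since $W\bfone=\bfone$ we have $(W-I)J=0$, hence $(W-I)x(t)=(W-I)(I-J)x(t)$ and therefore $\|(W-I)x(t)\|\leq \|W-I\|_2\,\|(I-J)x(t)\|\leq \|(I-J)x(t)\|$, where the last step uses that $W$ is symmetric positive semidefinite with $\|W\|_2=1$, so the eigenvalues of $W-I$ lie in $[-1,0]$. Taking expectations and invoking the bound~\eqref{eqn:cssrate} gives $\Ex[\|(W-I)x(t)\|]\leq \frac{\sqrt{\pi m}\,G\,\lambda^{-2}}{\eta\sqrt t\,\log(\lambda^{-1})}$. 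This reduction via $(W-I)J=0$ is essential, since a naive bound $\|W-I\|_2\|x(t)\|$ would not decay.

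For the gradient-step term, the choice $\alpha(t)=[2(L+\eta\sqrt t)]^{-1}$ gives $\alpha(t)\leq \frac{1}{2\eta\sqrt t}$, and $\Ex[\|g(t-\tau(t))\|]\leq \sqrt m\,G$ exactly as in the proof of Theorem~\ref{thm:cssrate} (each row $g_i(t-\tau_i(t))$ has expected norm at most $G$), so $\alpha(t)\,\Ex[\|g(t-\tau(t))\|]\leq \frac{\sqrt m\,G}{2\eta\sqrt t}$. Adding the two contributions and factoring out $\sqrt m\,G/(\eta\sqrt t)$ yields precisely $\Ex[\|x(t+1)-x(t)\|]\leq \frac{\sqrt m G}{\eta}\bigl(\frac{\sqrt\pi\lambda^{-2}}{\log(\lambda^{-1})}+\frac12\bigr)\cdot\frac{1}{\sqrt t}=\frac{C}{\sqrt t}$, which is the claim. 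The argument is short and I anticipate no real obstacle; the only points that need care are justifying $\proj_\calX(x(t))=x(t)$ — i.e. noting that the statement is used for $t\geq1$, since the initial guess $x(0)$ need not lie in $\calX$ — and using the sharp estimate $\|W-I\|_2\leq1$ rather than the crude $\|W\|_2+1=2$, which is what makes the stated constant $C$ come out with coefficient $1$ on the disagreement term.
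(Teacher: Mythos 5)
Your argument is correct and is essentially the paper's own proof: both use non-expansiveness of $\proj_\calX$ together with $x(t)\in\calX$ to reduce to $\|(I-W)x(t)\|+\alpha(t)\|g(t-\tau(t))\|$, then the identity $(I-W)=(I-W)(I-J)$ with $\|I-W\|_2\leq 1$ and Theorem \ref{thm:cssrate} for the first term, and $\alpha(t)\leq 1/(2\eta\sqrt{t})$ with $\Ex[\|g(t-\tau(t))\|]\leq\sqrt{m}G$ for the second. The points you flag for care (iterates lying in $\calX$, the sharp bound $\|W-I\|_2\leq 1$) are exactly the ones the paper relies on implicitly.
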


\begin{proof}
See Appendix \ref{app:proof_adjrate}.
\end{proof}

From the estimate of difference between consecutive iterates, we can also bound the expected difference
between $x(t)$ and $x(t-\tau(t))$ as follows.
\begin{corollary}\label{cor:delayrate}
Let $\{x(t)\}$ be the iterates generated by Algorithm \eqref{eqn:alg} with the settings of $\alpha(t)$,
$\lambda$, and $\eta$ same as in Theorem \ref{thm:cssrate}. Then there is
\begin{equation}
\Ex[\|x(t)-x(t-\tau(t))\|] \leq C\del{\frac{\sqrt{2m}B}{\sqrt{t}}+\frac{4mB^2}{t}}=O\left(\frac{1}{\sqrt{t}}\right).
\end{equation}
where $C$ is the constant defined in Corollary \ref{cor:adjrate}.
In particular, if $t\geq 8mB^2$, there is $\Ex[\|x(t)-x(t-\tau(t))\|]\leq \frac{2\sqrt{2m}CB}{\sqrt{t}}$.
\end{corollary}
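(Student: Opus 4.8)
The plan is to write $x(t)-x(t-\tau(t))$ as a telescoping sum of consecutive increments over the (random) delay window and then invoke the adjacent-iterate estimate of Corollary~\ref{cor:adjrate}, controlling the random window with the second-moment bound on the delays. Set $\taubar(t):=\max_{1\le i\le m}\tau_i(t)$; since $g_i(t-\tau_i(t))$ is evaluated at $x(t-\tau_i(t))$ we have $\tau_i(t)\le t$, so the window $\{t-\taubar(t),\dots,t-1\}$ is well defined. First I would telescope per node and then enlarge each node's window to the common one $[t-\taubar(t),t-1]$: for each $i$,
\[
\|x_i(t)-x_i(t-\tau_i(t))\|\le\sum_{s=t-\tau_i(t)}^{t-1}\|x_i(s+1)-x_i(s)\|\le\sum_{s=t-\taubar(t)}^{t-1}\|x_i(s+1)-x_i(s)\|.
\]
Applying the triangle inequality for the Euclidean norm on $\mathbb{R}^m$ to the nonnegative vectors $(\,\|x_i(s+1)-x_i(s)\|\,)_{i=1}^m$ summed over $s$, this upgrades to the Frobenius norm,
\[
\|x(t)-x(t-\tau(t))\|\le\sum_{s=t-\taubar(t)}^{t-1}\|x(s+1)-x(s)\|=\sum_{s=0}^{t-1}\bfone\{\taubar(t)\ge t-s\}\,\|x(s+1)-x(s)\|.
\]

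Next I would take expectations. Because the delays occurring in iteration $t$ are independent of the iterates $x(0),\dots,x(t)$ (the independence built into the model), the indicator $\bfone\{\taubar(t)\ge t-s\}$ is independent of $\|x(s+1)-x(s)\|$, so term by term, and with $k:=t-s$,
\[
\Ex[\|x(t)-x(t-\tau(t))\|]\le\sum_{k=1}^{t}\prob(\taubar(t)\ge k)\,\Ex[\|x(t-k+1)-x(t-k)\|].
\]
I then split the sum at $k=\lceil t/2\rceil$. For $1\le k\le t/2$ one has $t-k\ge t/2\ (\ge 1)$, hence $\Ex[\|x(t-k+1)-x(t-k)\|]\le C/\sqrt{t-k}\le\sqrt{2}\,C/\sqrt{t}$ by Corollary~\ref{cor:adjrate}; summing and using the layer-cake identity $\sum_{k\ge1}\prob(\taubar(t)\ge k)=\Ex[\taubar(t)]$ together with $\Ex[\taubar(t)]\le\sqrt{\Ex[\taubar(t)^2]}\le\sqrt{\sum_{i=1}^m\Ex[\tau_i(t)^2]}\le\sqrt{m}\,B$ produces the term $C\sqrt{2m}\,B/\sqrt{t}$. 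For $t/2<k\le t$ I would use the uniform bound $\Ex[\|x(t-k+1)-x(t-k)\|]\le C$ (Corollary~\ref{cor:adjrate} again, the lone endpoint term absorbed into $C$) and bound the tail $\sum_{k>t/2}\prob(\taubar(t)\ge k)\le\Ex[\taubar(t)\bfone\{\taubar(t)>t/2\}]\le(2/t)\Ex[\taubar(t)^2]\le 2mB^2/t$ (a Markov-type estimate, since $\taubar(t)<2\taubar(t)^2/t$ on that event); this contributes a term of order $mB^2/t$. Summing the two pieces gives $\Ex[\|x(t)-x(t-\tau(t))\|]\le C\big(\sqrt{2m}\,B/\sqrt{t}+4mB^2/t\big)=O(1/\sqrt{t})$, and the final claim is immediate: $t\ge 8mB^2$ is exactly the condition $4mB^2/t\le\sqrt{2m}\,B/\sqrt{t}$, under which the second term is at most the first and the bound reads $2\sqrt{2m}\,CB/\sqrt{t}$.

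The main obstacle I anticipate is the bookkeeping around the \emph{random} summation window: its length $\taubar(t)$ and starting index $t-\taubar(t)$ are random, so one must ensure they are not correlated with the increments being summed --- this is exactly where the independence of the time-$t$ delays from the earlier iterates is used --- and one must check that the rare event $\{\taubar(t)>t/2\}$, on which the clean $1/\sqrt{t-k}$ decay is unavailable, contributes only a lower-order $O(1/t)$ term; controlling that event requires the second moment $\Ex[\taubar(t)^2]\le mB^2$, a union bound over the $m$ nodes, which is where the factor $m$ enters. The remaining ingredients --- passing from per-node Euclidean norms to the Frobenius norm via the triangle inequality, and choosing the split point $t/2$ so that $t-k\ge t/2$ on the bulk of the sum --- are routine.
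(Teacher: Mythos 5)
Your proposal is correct and follows essentially the same route as the paper: telescope over the delay window governed by $\taubar(t)=\max_i\tau_i(t)$, invoke Corollary~\ref{cor:adjrate} on each increment, split at $t/2$, and control the bulk via $\Ex[\taubar(t)]\le\sqrt{m}B$ and the tail via $\Ex[\taubar(t)^2]\le mB^2$. The only differences are bookkeeping: the paper compares $\sum 1/\sqrt{s}$ to an integral and then splits on the value of $\taubar(t)$, whereas you swap expectation and sum with indicators and a layer-cake identity --- which in fact makes explicit the independence of $\tau(t)$ from earlier iterates that the paper uses tacitly --- and the lone $s=0$ endpoint you flag is handled in the paper by the same harmless reduction to $\taubar(t)\le t-2$.
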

\begin{proof}
See Appendix \ref{app:proof_delayrate}.
\end{proof}

Without loss of generality and for sake of notation simplicity, 
we assume iteration number $t>8mB^2$ and $\Ex[\|x(t)-x(t-\tau(t))\|]\leq \frac{2\sqrt{2m}CB}{\sqrt{t}}$
in the remaining derivations. The decay rate $O(1/\sqrt{t})$
of $\Ex[\|x(t)-x(t-\tau(t))\|]$ is useful to estimate the convergence
rate of objective function value later.

\begin{lemma}\label{lemma:sum_inprod}
Let $\{x(t)\}$ be the iterates generated by Algorithm \eqref{eqn:algupd}, then 
the following inequality holds for all $T\geq 1$:
\begin{align}
\qquad \sum_{t=1}^{T} \Ex\left\langle \nabla f(x(t))-\nabla f(x(t-\tau(t))), x(t+1)-x^*\right\rangle
\leq 8\sqrt{2nLT} mRCB
\end{align}
where $C$ is the constant defined in Corollary \ref{cor:adjrate}.
\end{lemma}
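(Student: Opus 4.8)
The plan is a direct term-by-term estimate followed by summation of a $1/\sqrt t$ series. For fixed $t$ I would first apply Cauchy--Schwarz to the summand,
$$\Ex\langle \nabla f(x(t))-\nabla f(x(t-\tau(t))),\, x(t+1)-x^*\rangle \le \Ex\!\big[\,\|\nabla f(x(t))-\nabla f(x(t-\tau(t)))\|\cdot\|x(t+1)-x^*\|\,\big],$$
where $\|\cdot\|$ is the Frobenius norm fixed in Section \ref{subsec:notation}. The second factor is deterministic and I would bound it using the box structure of the domain: every row $x_i(t+1)$ of $x(t+1)$ and the solution $x^*\in\mathbb{R}^n$ both lie in $X=\{x:\|x\|_\infty\le R\}$, so $\|x_i(t+1)-x^*\|_2\le 2\sqrt n R$ and hence $\|x(t+1)-x^*\|\le 2\sqrt{mn}\,R$. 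For the first factor I would use $L$-smoothness rowwise, $\|\nabla f_i(x_i(t))-\nabla f_i(x_i(t-\tau_i(t)))\|\le L\|x_i(t)-x_i(t-\tau_i(t))\|$, and sum the squares to get $\|\nabla f(x(t))-\nabla f(x(t-\tau(t)))\|\le L\,\|x(t)-x(t-\tau(t))\|$.

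Combining the two bounds, each summand is at most $2\sqrt{mn}\,RL\,\Ex[\|x(t)-x(t-\tau(t))\|]$. Now I would invoke Corollary \ref{cor:delayrate} in the simplified regime $t>8mB^2$ (the standing assumption stated just before the lemma), which gives $\Ex[\|x(t)-x(t-\tau(t))\|]\le \tfrac{2\sqrt{2m}\,CB}{\sqrt t}$, so the $t$-th summand is bounded by $\tfrac{4\sqrt2\,m\sqrt n\,RLCB}{\sqrt t}$. Summing over $t=1,\dots,T$ and using $\sum_{t=1}^{T} t^{-1/2}\le 2\sqrt T$ yields a bound of the form $8\sqrt2\,m\sqrt n\,RLCB\,\sqrt T$, which is the asserted estimate $8\sqrt{2nLT}\,mRCB$ (modulo where the factor involving $L$ sits relative to the square root).

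All steps here are routine; I do not anticipate a genuine obstacle, since the heavy lifting — the disagreement recursion and, crucially, the gradient-gap estimate $\Ex[\|x(t)-x(t-\tau(t))\|]=O(1/\sqrt t)$ — has already been done in Theorem \ref{thm:cssrate} and Corollaries \ref{cor:adjrate}--\ref{cor:delayrate}. The only points worth care are: (i) keeping the Frobenius/rowwise bookkeeping consistent when aggregating the $m$ nodewise Lipschitz bounds; (ii) exploiting the explicit box form of $X$ rather than a generic diameter, so as to land the clean constant $2\sqrt{mn}\,R$ for $\|x(t+1)-x^*\|$; and (iii) applying Corollary \ref{cor:delayrate} in its $t>8mB^2$ form, which is legitimate precisely because of the convention adopted in the paragraph preceding the lemma. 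This lemma is a bridge result whose sole purpose is to quantify the accumulated ``staleness error'' of the delayed gradients, to be fed into the optimality analysis of Theorem \ref{thm:z_opt}.
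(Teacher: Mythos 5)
Your proposal follows essentially the same route as the paper's own proof: Cauchy--Schwarz on each summand, the box bound $\|x(t+1)-x^*\|\le 2\sqrt{mn}\,R$, rowwise Lipschitz continuity of $\nabla f_i$, Corollary \ref{cor:delayrate} in the $t>8mB^2$ regime, and $\sum_{t=1}^{T}t^{-1/2}\le 2\sqrt{T}$. The one discrepancy you flag --- your $L$ outside the square root versus the paper's $\sqrt{L}$ inside it --- is real but originates in the paper's own argument, which writes $\|\nabla f(x(t))-\nabla f(x(t-\tau(t)))\|^2\le L\|x(t)-x(t-\tau(t))\|^2$ (the exponent on $L$ should be $2$) and then applies the first-moment bound of Corollary \ref{cor:delayrate} to a squared norm; your bookkeeping yields $8\sqrt{2nT}\,mRLCB$, which is what a careful derivation gives and which preserves the $O(\sqrt{T})$ growth needed in Theorem \ref{thm:y_opt}.
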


\begin{proof}
See Appendix \ref{app:proof_sum_inprod}.
\end{proof}

Now we are ready to prove the convergence rate of objective function value.
We first present the estimate of this rate for running averages $y(t)$ in the following theorem.

\begin{theorem}\label{thm:y_opt}
Let $\{x(t)\}$ be the iterates generated by Algorithm \eqref{eqn:algupd} with 
$\alpha(t)= [2(L+\eta\sqrt{t})]^{-1}$ 
for some $\eta>0$, then
\begin{align}\label{eqn:y_opt}
\Ex [f(y(T))]- f(x^*)\leq \frac{L\DX^2}{T}+
\frac{K}{\sqrt{T}}=O\left(\frac{1}{\sqrt{T}}\right)
\end{align}
where $y(T)=(1/T) \sum_{t=1}^{T}x(t+1)$ is the running average of $\{x(t)\}$,
$\DX=2\sqrt{mn}R$ is the diameter of $\mathcal{X}$,
and $K:=\eta\DX^2+4\sqrt{2mL}\DX CB+(4m\sigma^2/\eta)$.
\end{theorem}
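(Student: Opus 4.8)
The plan is to run a standard ``descent-lemma plus telescoping'' argument on the projected iteration \eqref{eqn:alg}, but carried out for the \emph{consensus average} and then transferred to $y(T)$ via convexity, with the delay and stochastic errors absorbed into the lower-order terms using the corollaries and Lemma \ref{lemma:sum_inprod} already established. First I would use the characterization of $x(t+1)$ as the minimizer in \eqref{eqn:alg}: the optimality condition for a projected gradient step gives, for every $x\in\calX$ (in particular $x=x^*$),
\begin{equation*}
\langle g(t-\tau(t)), x(t+1)-x^*\rangle \le \frac{1}{2\alpha(t)}\left(\|Wx(t)-x^*\|^2 - \|x(t+1)-x^*\|^2 - \|x(t+1)-Wx(t)\|^2\right).
\end{equation*}
Since $W\bfone(x^*)^\top = \bfone(x^*)^\top$ and $\|W\|_2=1$, we have $\|Wx(t)-x^*\| = \|W(x(t)-x^*)\| \le \|x(t)-x^*\|$, so the first term telescopes cleanly when we sum over $t$ and divide by the step sizes, handling the ``$L\DX^2/T$'' and ``$\eta\DX^2/\sqrt T$'' pieces once we plug in $\alpha(t)=[2(L+\eta\sqrt t)]^{-1}$ and bound $\|x(t)-x^*\|\le \DX$.

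Next I would convert the left-hand side into something involving the true gradient at the current iterate and a convexity gap. Write $g(t-\tau(t)) = \nabla f(x(t-\tau(t))) + \epsilon(t-\tau(t))$, then further split $\nabla f(x(t-\tau(t))) = \nabla f(x(t)) + \big(\nabla f(x(t-\tau(t)))-\nabla f(x(t))\big)$. The inner product with $\nabla f(x(t))$ is handled by the standard smoothness/convexity inequality $\langle \nabla f(x(t)), x(t+1)-x^*\rangle \ge f(x(t+1)) - f(x^*) - \tfrac{L}{2}\|x(t+1)-x(t)\|^2$ (using $L$-smoothness of each $f_i$ to relate $f(x(t+1))$ and $f(x(t))$), whose $\sum_t \tfrac L2 \|x(t+1)-x(t)\|^2$ remainder is controlled by Corollary \ref{cor:adjrate} (giving a $\sum_t 1/t = O(\log T)$ — here I need to double-check that the $L$ in $\alpha(t)$ makes this term actually $O(1)$ rather than $O(\log T)$, which is the kind of bookkeeping that could change the constant $K$; if it only yields $O(\log T)$ one absorbs it or notes it). The gradient-difference term $\sum_t \Ex\langle \nabla f(x(t))-\nabla f(x(t-\tau(t))), x(t+1)-x^*\rangle$ is exactly what Lemma \ref{lemma:sum_inprod} bounds by $8\sqrt{2nLT}\,mRCB$, contributing the $4\sqrt{2mL}\DX CB/\sqrt T$ term after dividing by $T$ and substituting $\DX = 2\sqrt{mn}R$. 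The noise term $\sum_t \alpha(t)\Ex\langle \epsilon(t-\tau(t)), \cdot\rangle$ — after moving it to the correct side using the $\|x(t+1)-Wx(t)\|^2$ slack and Young's inequality, together with $\Ex[\|\epsilon\|^2]\le m\sigma^2$ and independence of the delay from the error so that unbiasedness still applies — gives the $\sum_t \alpha(t) m\sigma^2 = O(\sqrt T)$ piece, i.e.\ the $4m\sigma^2/\eta$ in $K$.

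Finally I would assemble: summing the displayed inequality over $t=1,\dots,T$, the telescoped term contributes $\le \tfrac{1}{2\alpha(T)}\DX^2 = (L+\eta\sqrt T)\DX^2$ plus the cross terms from the varying $1/\alpha(t)$ (monotone in $t$, so the differences are nonnegative and sum to $\tfrac{1}{2\alpha(T)}\DX^2$ again up to constants), and then divide by $T$; the leftover $\sum_t (f(x(t+1))-f(x^*))$ is bounded below by $T(\Ex f(y(T)) - f(x^*))$ via Jensen's inequality applied to the convex $f$ and the definition $y(T)=(1/T)\sum_t x(t+1)$. Rearranging yields \eqref{eqn:y_opt}. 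The main obstacle is the careful handling of the delay term: we do not have a pathwise bound on $x(t)-x(t-\tau(t))$, only the expectation bound from Corollary \ref{cor:delayrate} (valid for $t>8mB^2$), and the independence assumption between $\tau_i(t)$ and $\epsilon_i(t)$ must be invoked precisely when taking expectations of the mixed inner-product terms; getting the stochastic conditioning right (what is measurable with respect to what, and that $\Ex[\epsilon(t-\tau(t))\mid \mathcal F]=0$ still holds) is where the argument is most delicate, and it is also where the $t>8mB^2$ simplification quietly does its work. Everything else is the routine telescoping bookkeeping that produces the explicit constant $K$.
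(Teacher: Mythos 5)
Your proposal follows essentially the same route as the paper's proof: a three-point inequality for the prox step \eqref{eqn:alg}, telescoping of $\frac{1}{2\alpha(t)}\|x(t)-x^*\|^2$ against $\DX^2$, splitting the gradient error into the delay part (handled by Lemma \ref{lemma:sum_inprod}) and the zero-mean stochastic part (unbiasedness paired with $x(t-\tau(t))-x^*$, plus Young's inequality for the remainder), and Jensen's inequality to pass to $y(T)$. There are, however, two bookkeeping points where your plan as written would not quite close, and both are resolved in the paper by keeping the quadratic slack from the prox inequality rather than invoking Corollary \ref{cor:adjrate}. First, Corollary \ref{cor:adjrate} bounds only the first moment $\Ex[\|x(t+1)-x(t)\|]$, so it cannot directly control $\sum_t\Ex[\|x(t+1)-x(t)\|^2]$ (the inequality $(\Ex[X])^2\leq\Ex[X^2]$ goes the wrong way); the paper instead observes that $\frac{L}{2}-\frac{1}{4\alpha(t)}=-\frac{\eta\sqrt{t}}{2}\leq 0$, so the smoothness remainder is absorbed entirely and the leftover $-\frac{\eta\sqrt{t}}{2}\|x(t+1)-x(t)\|^2$ is then used to cancel the Young term from the noise --- this is precisely why $L$ appears in the step size, as you suspected. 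Second, by centering the three-point inequality at $Wx(t)$ you obtain slack $-\frac{1}{2\alpha(t)}\|x(t+1)-Wx(t)\|^2$ rather than $-\frac{1}{4\alpha(t)}\|x(t+1)-x(t)\|^2$; the paper instead expands $\|x-Wx(t)\|^2=\|x-x(t)\|^2+2\langle(I-W)x(t),x-x(t)\rangle+\text{const}$ and bounds the cross term by $\frac{1}{4\alpha(t)}\|x(t)-x(t+1)\|^2$ via the $\|\cdot\|_{I-W}$ seminorm identity, so the slack comes out in exactly the variable needed. Your centering is equally valid for the telescoping step (since $Wx^*=x^*$ and $\|W\|_2=1$ give $\|W(x(t)-x^*)\|\leq\|x(t)-x^*\|$), but it leaves an extra $\|(I-W)x(t)\|^2$ term to control, which runs into the same first-versus-second-moment issue. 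With either fix carried out the argument closes and produces the stated constant $K$.
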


\begin{proof}
See Appendix \ref{app:proof_y_opt}.
\end{proof}

We have shown that the running average $y(T)$ makes the objective function
decay as in \eqref{eqn:y_opt}.
However, since each node $i$ obtains its own $y_i(T)$ which may not be consensual
(and the left hand side of \eqref{eqn:y_opt} could be negative),
we need to look at their consensus average $z(T)=(1/m)\sum_{i=1}^my_i(T)$ 
and the convergence rate of its objective function value. This is given in the following theorem.

\begin{theorem}\label{thm:z_opt}
Let $x(t)$ be generated by Algorithm \eqref{eqn:algupd_node} with $\alpha(t)=[2(L+\eta\sqrt{t})]^{-1}$
for some $\eta>0$. 
Let $y(T)=(1/T)\sum_{t=1}^{T} x(t+1)$ be the running average of $x(t)$ and 
$z(T)=Jy(T)=(1/m)\sum_{i=1}^m y_i(T)$ be the consensus average of $y(T)$, then
\begin{align}\label{eqn:optbound}
0\leq\Ex [f( z(T) )] - f(x^*) \leq
\frac{L\DX^2+2\sqrt{m}LC^2}{T}+
\frac{K+2\sqrt{m}CG}{\sqrt{T}}=O\left(\frac{1}{\sqrt{T}}\right)
\end{align}
where $C$ is defined as in Corollary \ref{cor:adjrate}, and 
$\DX$ and $K$ are defined as in Theorem \ref{thm:y_opt}.
\end{theorem}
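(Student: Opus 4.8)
The plan is to bridge the gap between Theorem~\ref{thm:y_opt}, which already controls $\Ex[f(y(T))]-f(x^*)$, and the desired bound on $\Ex[f(z(T))]-f(x^*)$, where $z(T)=Jy(T)$ is the consensus projection of the running average. Since $f(x)=\sum_{i=1}^m f_i(x_i)$ and $z(T)$ replaces each row $y_i(T)$ by the average $\bar y(T)=(1/m)\sum_i y_i(T)$, the difference $f(z(T))-f(y(T))$ is a sum of terms $f_i(\bar y(T))-f_i(y_i(T))$, each of which can be bounded by first-order convexity (or Lipschitz continuity of $\nabla f_i$) in terms of $\|\bar y(T)-y_i(T)\|$, i.e.\ in terms of the disagreement $\|(I-J)y(T)\|$. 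So the first step is to write
\begin{equation*}
\Ex[f(z(T))]-f(x^*) = \bigl(\Ex[f(y(T))]-f(x^*)\bigr) + \Ex[f(z(T))-f(y(T))],
\end{equation*}
apply Theorem~\ref{thm:y_opt} to the first bracket, and estimate the second.

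For the second term, I would use the $L$-Lipschitz gradient bound: for each $i$, $f_i(\bar y(T)) \le f_i(y_i(T)) + \langle \nabla f_i(y_i(T)), \bar y(T)-y_i(T)\rangle + \tfrac{L}{2}\|\bar y(T)-y_i(T)\|^2$. Summing over $i$ and taking expectations, the linear term is handled either by $\|\nabla f_i\|\le G$ together with $\sum_i \|\bar y(T)-y_i(T)\| \le \sqrt m\,\|(I-J)y(T)\|$, and the quadratic term by $\sum_i \|\bar y(T)-y_i(T)\|^2 = \|(I-J)y(T)\|^2$. This reduces everything to bounding $\Ex[\|(I-J)y(T)\|]$ and $\Ex[\|(I-J)y(T)\|^2]$. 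The first moment is exactly \eqref{eqn:Ycssrate} from Theorem~\ref{thm:cssrate}, giving an $O(1/\sqrt T)$ contribution. The second moment needs a little more care; I expect to control it by going back to the per-iteration bound $\Ex[\|(I-J)x(t+1)\|] \le \sqrt m G\sum_{s=0}^{t}\alpha(s)\lambda^{t-s}$ from the proof of Theorem~\ref{thm:cssrate}, observing via Corollary~\ref{cor:adjrate}-type estimates (or directly Lemma~\ref{lemma:seqrate}) that $\|(I-J)x(t+1)\| \le C/\sqrt t$ in expectation and in fact the bound is deterministic-looking enough that $\|(I-J)y(T)\|^2 \le \bigl(\tfrac1T\sum_t \|(I-J)x(t+1)\|\bigr)^2$, and then $\sum_t C/\sqrt t \le 2C\sqrt T$ gives $\|(I-J)y(T)\| \le 2C/\sqrt T$, hence $\|(I-J)y(T)\|^2 \le 4C^2/T$. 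Plugging these in produces the extra $\tfrac{2\sqrt m L C^2}{T}$ and $\tfrac{2\sqrt m C G}{\sqrt T}$ terms appearing in \eqref{eqn:optbound}.

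The lower bound $0 \le \Ex[f(z(T))]-f(x^*)$ is immediate and should be stated separately: $z(T) = \bfone\bar y(T)^\top$ is consensual with $\bar y(T)\in X$ (since $X$ is convex and each $y_i(T)\in X$ as a convex combination of projected iterates), so $f(z(T)) = \sum_i f_i(\bar y(T)) = f(\bar y(T)) \ge \min_{x\in X} f(x) = f(x^*)$, and this survives taking expectations. This is the reason we pass from $y(T)$ to $z(T)$ in the first place, as the text notes that the left side of \eqref{eqn:y_opt} could be negative.

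The main obstacle I anticipate is the bookkeeping around the second moment $\Ex[\|(I-J)y(T)\|^2]$: Theorem~\ref{thm:cssrate} only gives a first-moment bound, and naively $\Ex[\|(I-J)y(T)\|^2] \ge (\Ex[\|(I-J)y(T)\|])^2$ goes the wrong way. The resolution is that the induction in Theorem~\ref{thm:cssrate} actually yields the \emph{pathwise} (or at least conditional-expectation) bound $\|(I-J)x(t+1)\| \le \sqrt m G\sum_{s=0}^t \alpha(s)\lambda^{t-s} =: b_{t+1}$ up to the randomness only entering through $\Ex[\|g_i\|]\le G$; using this together with convexity of $\|\cdot\|^2$ and $\|\cdot\|$ to write $\|(I-J)y(T)\| \le \tfrac1T\sum_{t=1}^T \|(I-J)x(t+1)\|$ and then squaring, one gets $\Ex[\|(I-J)y(T)\|^2] \le (\tfrac1T\sum_t b_{t+1})^2 \le 4C^2/T$ via Lemma~\ref{lemma:seqrate}. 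Getting this step stated cleanly — making sure the constant matches the $2\sqrt m L C^2$ in the theorem — is where most of the actual work lies; everything else is substitution into Theorem~\ref{thm:y_opt}.
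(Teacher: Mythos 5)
Your proposal follows essentially the same route as the paper: split off $\Ex[f(y(T))]-f(x^*)$ via Theorem \ref{thm:y_opt}, bound the gap $f(z(T))-f(y(T))$ by the descent lemma together with $\|\nabla f_i\|\le G$ and the disagreement estimate of Theorem \ref{thm:cssrate} (yielding the extra $2\sqrt{m}CG/\sqrt{T}$ and $2\sqrt{m}LC^2/T$ terms), and obtain the lower bound from consensuality of $z(T)$. The second-moment subtlety you flag for $\Ex[\|(I-J)y(T)\|^2]$ is genuine, but the paper's own proof glosses over it in exactly the same way (it substitutes the first-moment bound of Theorem \ref{thm:cssrate} directly into the quadratic term), so your treatment is, if anything, the more careful of the two.
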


\begin{proof} We first bound the difference between the function values
at the running average $y(T)$ and the consensus average $z(T)=Jy(T)$:
\begin{align}\label{eqn:absdiff_fYfZ}
& \quad \, f(y(T))-f(z(T)) = \sum_{i=1}^m (f_i(y_i(T))-f_i(z(T))) \nonumber \\
& \leq \sum_{i=1}^m \langle \nabla f_i(z(T)),y_i(T)-z(T)\rangle + \frac{L_i}{2}\|y_i(T)-z(T)\|^2 \\
& \leq \sqrt{m}G \|(I-J)y(T)\| + \frac{L}{2}\|(I-J)y(T)\|^2 \leq \frac{2\sqrt{m}CG}{\sqrt{T}} + \frac{2C^2L}{T}, \nonumber
\end{align}
where we used convexity of $f_i$ and Lipschitz continuity of $\nabla f_i$ in the first inequality, 
$\|\nabla f_i\|\leq G$ and convexity of $\|\cdot\|^2$ 
in the second inequality, and Theorem \ref{thm:cssrate} to get the last inequality.
%
Therefore, combining \eqref{eqn:absdiff_fYfZ} and \eqref{eqn:y_opt} from Theorem \ref{thm:y_opt},
we obtain the bound in \eqref{eqn:optbound}.
Note that $z(T)$ is consensus, so $f(z(T))\geq f(x^*)$ since $x^*$ is
a consensus optimal solution of \eqref{eqn:deccon}.
This completes the proof.
\end{proof}

In summary, we have showed that the running average $y_i(T)$,
which can be easily updated by each node $i$, 
yields convergence in optimality and consensus feasibility.
More precisely, Theorem \ref{thm:cssrate} implies that $\|y_i(T)-z(T)\|$ converges to $0$ at rate $O(1/\sqrt{T})$
for all nodes $i$ where $z(T)=(1/m)\sum_{i=1}^m y_i(T)$ is their consensus average,
and Theorem \ref{thm:z_opt} implies that $f(z(T))$ converges to $f(x^*)$ at rate of $O(1/\sqrt{T})$.
It is known that $O(1/\sqrt{T})$ is the optimal rate
for stochastic gradient algorithms in centralized setting, 
and hence these two Theorems suggest an encouraging 
fact that such rate can be retained even if the problem becomes much more complicated, 
i.e., the gradients are stochastic and delayed, and the computation is carried out in decentralized setting.
{To retain convergence in this complex setting, we employed a diminishing step size policy
as commonly used in stochastic optimization. Such step size policy results in a convergence rate
of $O(1/\sqrt{T})$ even without delays and randomness in gradients.
Furthermore, due to errors and uncertainties in delayed and stochastic gradients, the iterates
may be directed further apart from solution during computations. As a consequence, the
constant in the estimated convergence rate appears to depend on the bound of set $X$ rather than 
the distance between initial guess and solution set 
as in the setting with non-delayed and non-stochastic gradients.}

\section{Numerical Experiments}\label{sec:experiment}

In this section, we test algorithm \eqref{eqn:algupd_node}  
on decentralized consensus optimization problem \eqref{eqn:deccon}
with delayed stochastic gradients
using a number of synthetic and real datasets.
The structure of network $\Gcal=(\Vcal,\Ecal)$ and objective function in \eqref{eqn:deccon}
are explained for each dataset, followed by performance evaluation
shown in plots of objective function $f(z(T))$ and disagreement 
$\sum_{i=1}^m \|y_i(T)-z(T)\|^2$ versus the iteration number $T$,
where $y_i(T)=(1/T)\sum_{t=1}^{T}x_i(t+1)$ is the running average of
$x_i(t)$ in algorithm \eqref{eqn:algupd_node} at each node $i$, and
$z(T)=(1/m)\sum_{i=1}^{m} y_i(T)$ is the consensus average 
at iteration $T$. 

\subsection{Test on synthetic data}

We first test on three different types of objective functions using
synthetic datasets. In particular, we apply algorithm \eqref{eqn:algupd_node}  
to decentralized least squares, decentralized robust least squares,
and decentralized logistic regression problems
with different delay and stochastic error combinations.
Then we compare the performance of the algorithm with and without delays
and stochastic errors in gradients.
The performance of the algorithm on different network size $m$
and time comparison with synchronous algorithm are also presented.

In the first set of tests on three different objective functions, we
simulate a network of regular $5\times 5$ 2-dimensional (2D) lattice of size $m=25$.
We set dimension of unknown $x$ to $n=10$ and generate an $\hat{x}\in\mathbb{R}^n$
using MATLAB built-in function \texttt{rand}, and set the $\ell_\infty$ radius
of $X$ to $R=1$. For each node $i$, we generate matrices $A_i\in\mathbb{R}^{p_i\times n}$ with $p_i=5$
using \texttt{randn}, and normalize each column into unit $\ell_2$ ball in $\mathbb{R}^{p_i}$ for
$i=1,\dots,m$.
Then we simulate
$b_i=A_i\hat{x}+\epsilon_i$ where $\epsilon_i$ is generated by
\texttt{randn} with mean $0$ and standard deviation $0.001$.
For decentralized least squares problem, 
we set the objective function to $f_i(x)=(1/2)\|A_ix-b_i\|^2$ at node $i$.
Therefore the Lipschitz constant of $\nabla f_i$ is $L_i=\|A_i^{\top}A_i\|_2$,
and we further set $L=\max_{1\leq i\leq m}\{L_i\}$.
The initial guess $x_i(0)$ is set to $0$ for all $i$.
For each iteration $t$, the delay $\tau_i(t)$ at each node $i$ is uniformly
drawn from integers $1$ to $B$ with $B=5$, $10$ and $20$. 
For given $t$, the stochastic gradient is simulated by 
setting $\nabla F_i(x_i(t);\xi_i(t))=A_i^{\top}(A_ix_i(t)-b_i)+\xi_i(t)$ 
where $\xi_i(t)$ is generated by \texttt{randn} with mean $0$
and standard deviation $\sigma$ set to $0.01$ and $0.05$.
We run our algorithm using step size $\alpha(t)=1/(2L+2\eta\sqrt{t})$ with 
$\eta=0.01$.
The objective function $f(z(T))-f^*$ and disagreement 
$\sum_{i=1}^m \|y_i(T)-z(T)\|^2$ versus the iteration number $T$ are plotted in
the top row of Figure \ref{fig:synthetic},
where the reference optimal objective $f^*=\min_{x\in X} \sum_{i=1}^mf_i(x)$ is computed
using centralized Nesterov's accelerated gradient method \cite{Nesterov:1983a,Tseng:2008a}.
In the two plots, we observe that both $f(z(T))-f^*$
and disagreement $\sum_{i=1}^m \|y_i(T)-z(T)\|^2$ decays to 0
as justified by our theoretical analysis in Section \ref{sec:convergence}.
In general, we observe that delays with larger bound $B$
and/or larger standard deviation $\sigma$ in stochastic gradient 
yield slower convergence, as expected.

We also tested on two different objective functions: robust least squares 
and logistic regression.
In robust least squares, we apply \eqref{eqn:algupd_node} to the decentralized
optimization problem \eqref{eqn:deccon} where the objective function is set to
\begin{equation}
f_i(x):=\sum_{j=1}^{p_i}h_i^{j}(x),\ \mbox{where }h_i^{j}(x)=
\begin{cases}
\frac{1}{2}|(a_i^{j})^{\top}x-b_i^{j}|^2 & \mbox{if } |(a_i^{j})^{\top}x-b_i^{j}|\leq\delta\\
\delta (|(a_i^{j})^{\top}x-b_i^{j}|-\frac{\delta}{2}) & \mbox{if }|(a_i^{j})^{\top}x-b_i^{j}|>\delta
\end{cases}
\end{equation}
where $(a_i^{j})^{\top}\in\mathbb{R}^n$ is the $j$-th row of matrix $A_i\in\mathbb{R}^{p_i\times n}$,
and $b_i^{j}\in\mathbb{R}$ is the $j$-th component of $b_i\in\mathbb{R}^{p_i}$ at each node $i$. 
In this test, we simulate network $\Gcal=(\Vcal,\Ecal)$ and set
$A_i$, $b_i$, $m$, $n$, $R$, $x_i(0)$ the same way as in the decentralized 
least squares test above, and set the parameter of the Huber norm in the robust least squares $\delta=0.05$.
The stochastic gradient is given by 
$\nabla F_i(x;\xi_i(t))=\sum_{j=1}^{p_i}\nabla h_i^j(x)+\xi_i(t)$ 
where $\xi_i(t)$ is generated as before with 
$\sigma$ set to $0.01$ and $0.05$. Lipschitz constants $L_i$ and $L$ are determined as 
in the previous test.
The settings of $\eta$ and $\tau_i(t)$ remain the same as well.
The objective function $f(z(T))-f^*$ and disagreement $\sum_{i=1}^m \|y_i(T)-z(T)\|^2$
are plotted in the middle row of Figure \ref{fig:synthetic}.
In these two plots, we observe similar convergence behavior as
in the test on the decentralized least squares problem above.
For the decentralized logistic regression, we generate $\hat{x}$, $\epsilon_i$ and $A_i$ the same way as before,
and set $b_i=\text{sign}(A_i\hat{x}+\epsilon_i)\in\{\pm1\}^{p_i}$ ($\text{sign}(0):=1$).
Now the objective function $f_i$ at node $i$ is set to
\begin{equation}
f_i(x)=\sum_{j=1}^{p_i}\left(\log[1+\exp((a_i^{j})^{\top}x)]-b_i^j(a_i^{j})^{\top}x\right),
\end{equation}
where $(a_i^{j})^{\top}\in\mathbb{R}^n$ is the $j$-th row of matrix $A_i\in\mathbb{R}^{p_i\times n}$,
and $b_i^{j}\in\mathbb{R}$ is the $j$-th component of $b_i\in\mathbb{R}^{p_i}$.
Then we perform \eqref{eqn:algupd_node}
to solve this problem in the network $\Gcal$ above. 
{Since $\nabla^2f_i(x)=\sum_{j}[\exp((a_i^{j})^{\top}x)/(1+\exp((a_i^{j})^{\top}x))^2]\cdot a_i^{j}(a_i^{j})^{\top}
\leq (1/4)\cdot\sum_j a_i^{j}(a_i^{j})^{\top}=(1/4)\cdot A_i^{\top}A_i$, there is
$\|\nabla f_i(x)-\nabla f_i(x')\|\leq (1/4)\cdot \|A_i^{\top}A_i\| \|x-x'\|$ for all $x,x'\in\mathbb{R}^n$.
Therefore we set $L_i=\|A_i^{\top}A_i\|_2/4$.}
The settings of the delay $\tau_i(t)$, $\eta$, and initial value 
$x_i(0)$ remain the same as before. The stochastic error level $\sigma$ is set to $0.1$ and $0.5$.
The objective function $f(z(T))-f^*$ and disagreement $\sum_{i=1}^m \|y_i(T)-z(T)\|^2$
are plotted in the bottom row of Figure \ref{fig:synthetic},
where similar convergence behavior as in the previous tests can be observed.

\begin{figure}[t!]
\centering
\includegraphics[width=.45\textwidth]{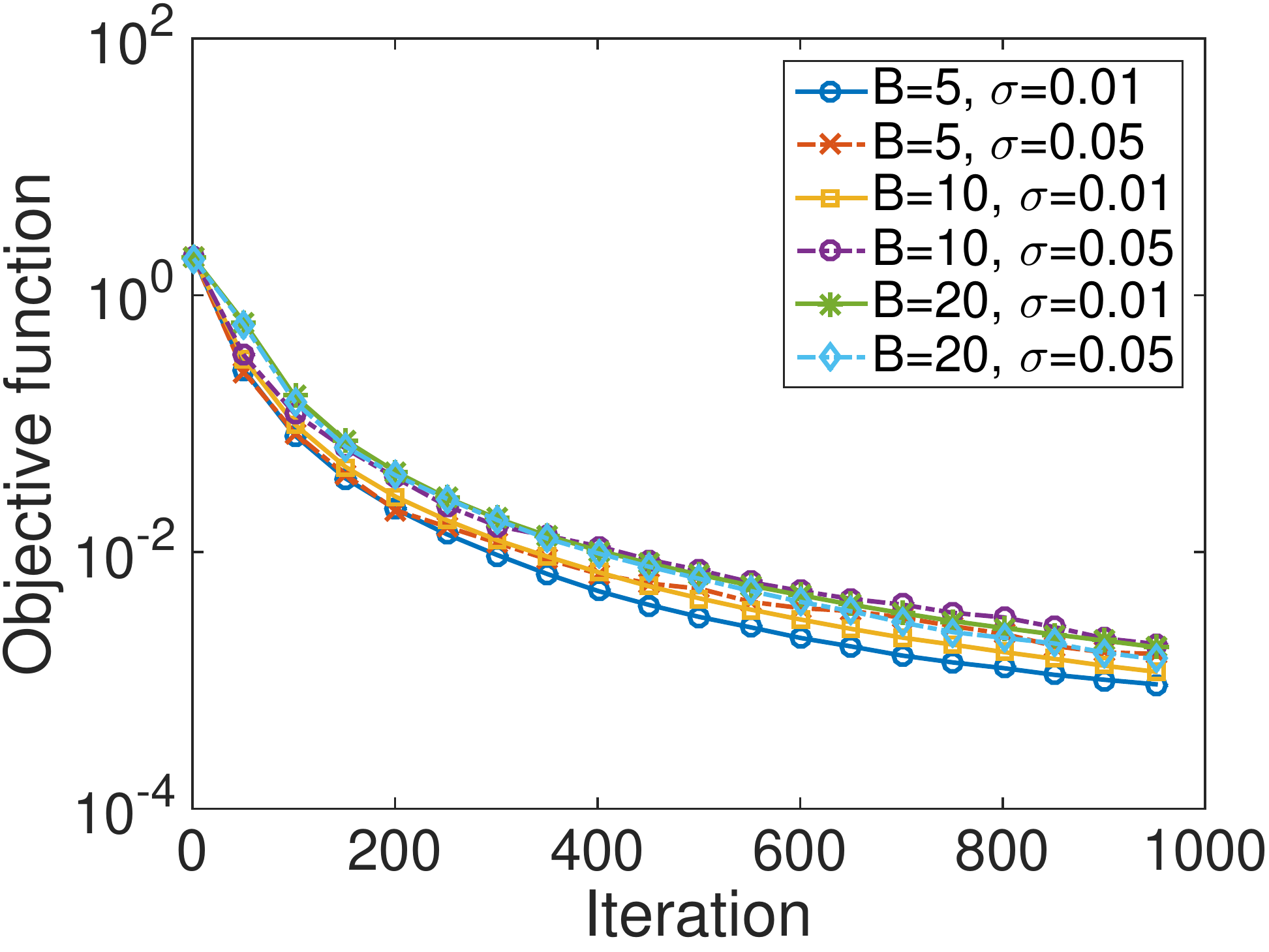}
\includegraphics[width=.45\textwidth]{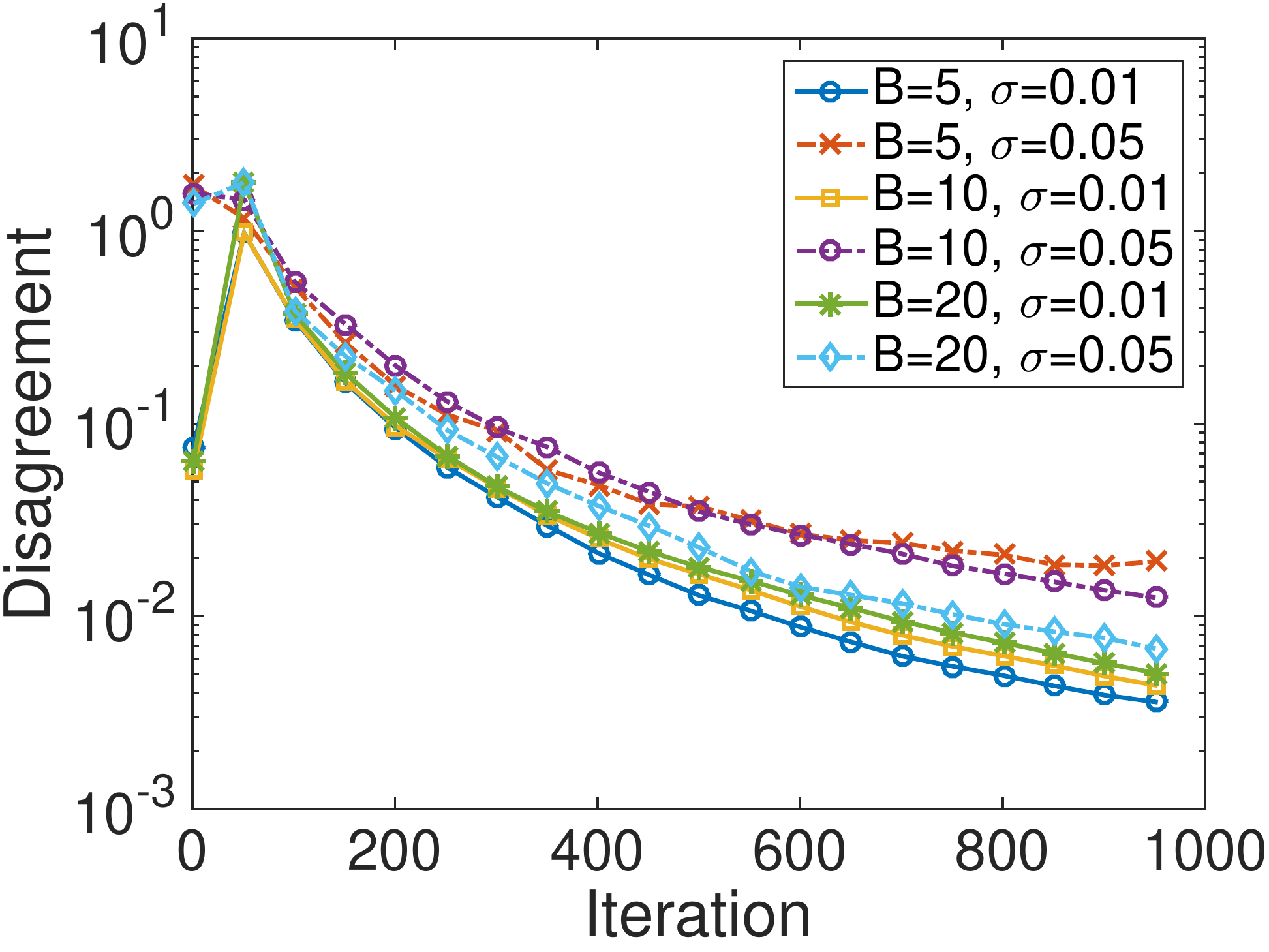}
\includegraphics[width=.45\textwidth]{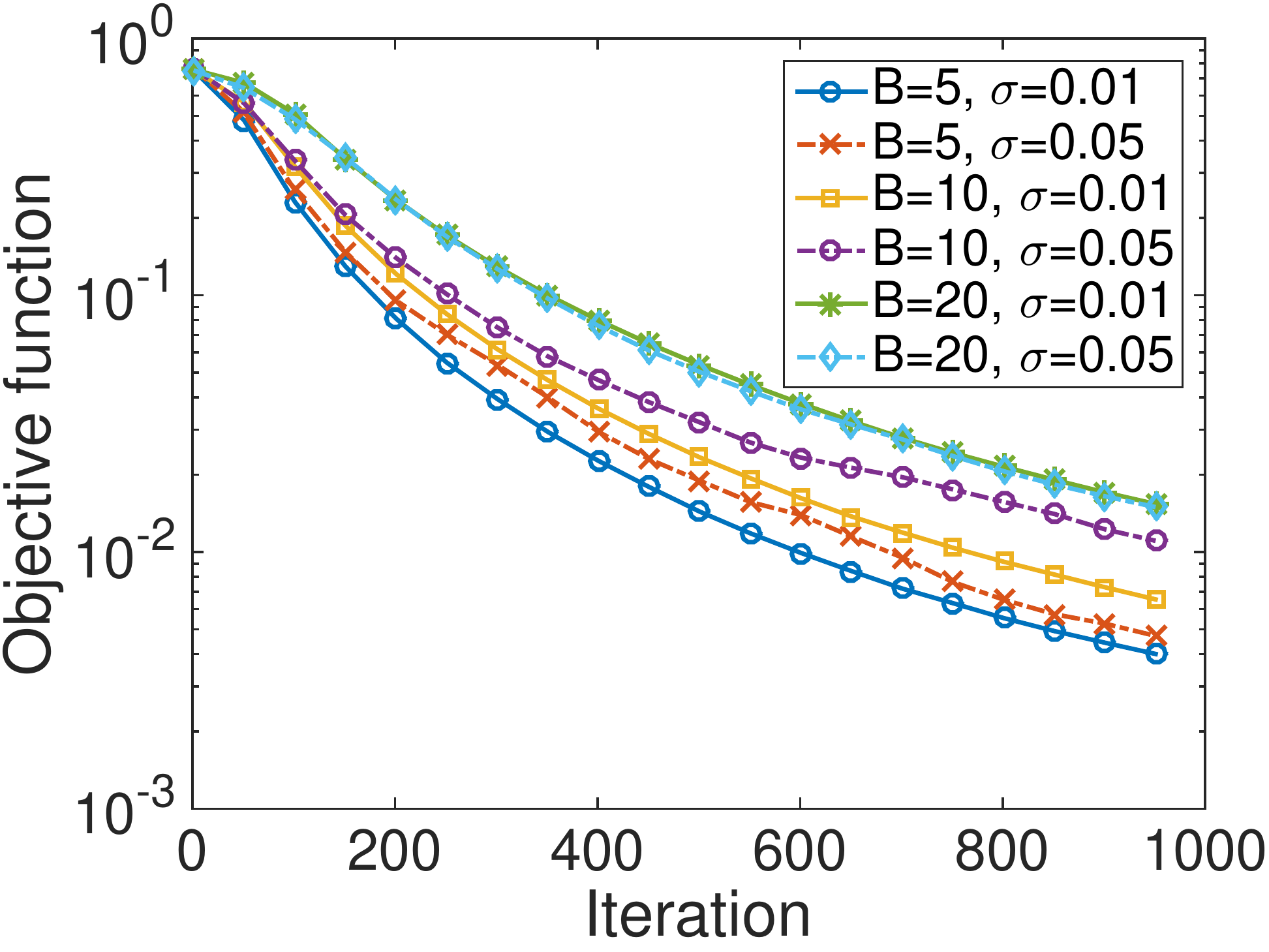}
\includegraphics[width=.45\textwidth]{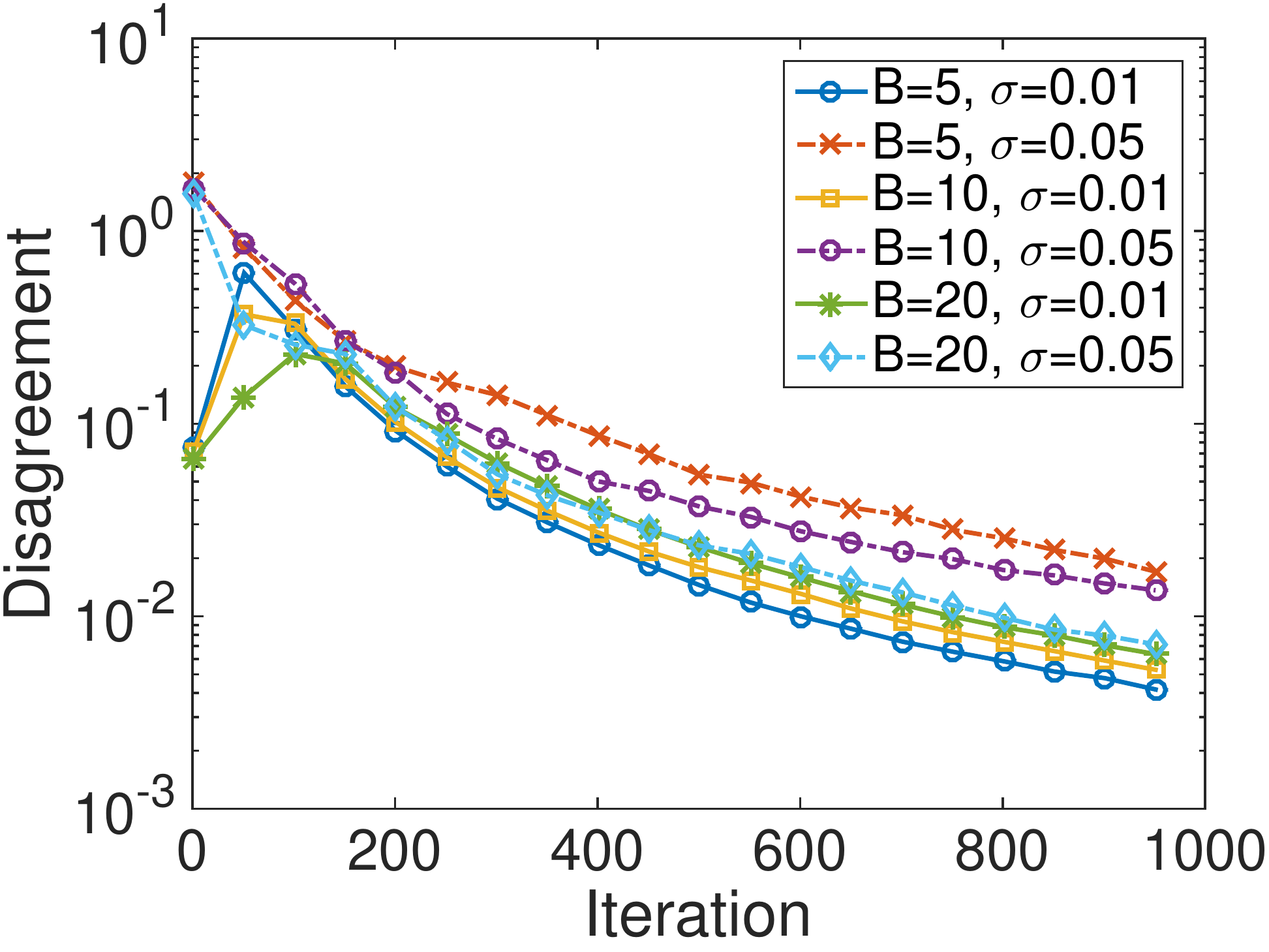}
\includegraphics[width=.45\textwidth]{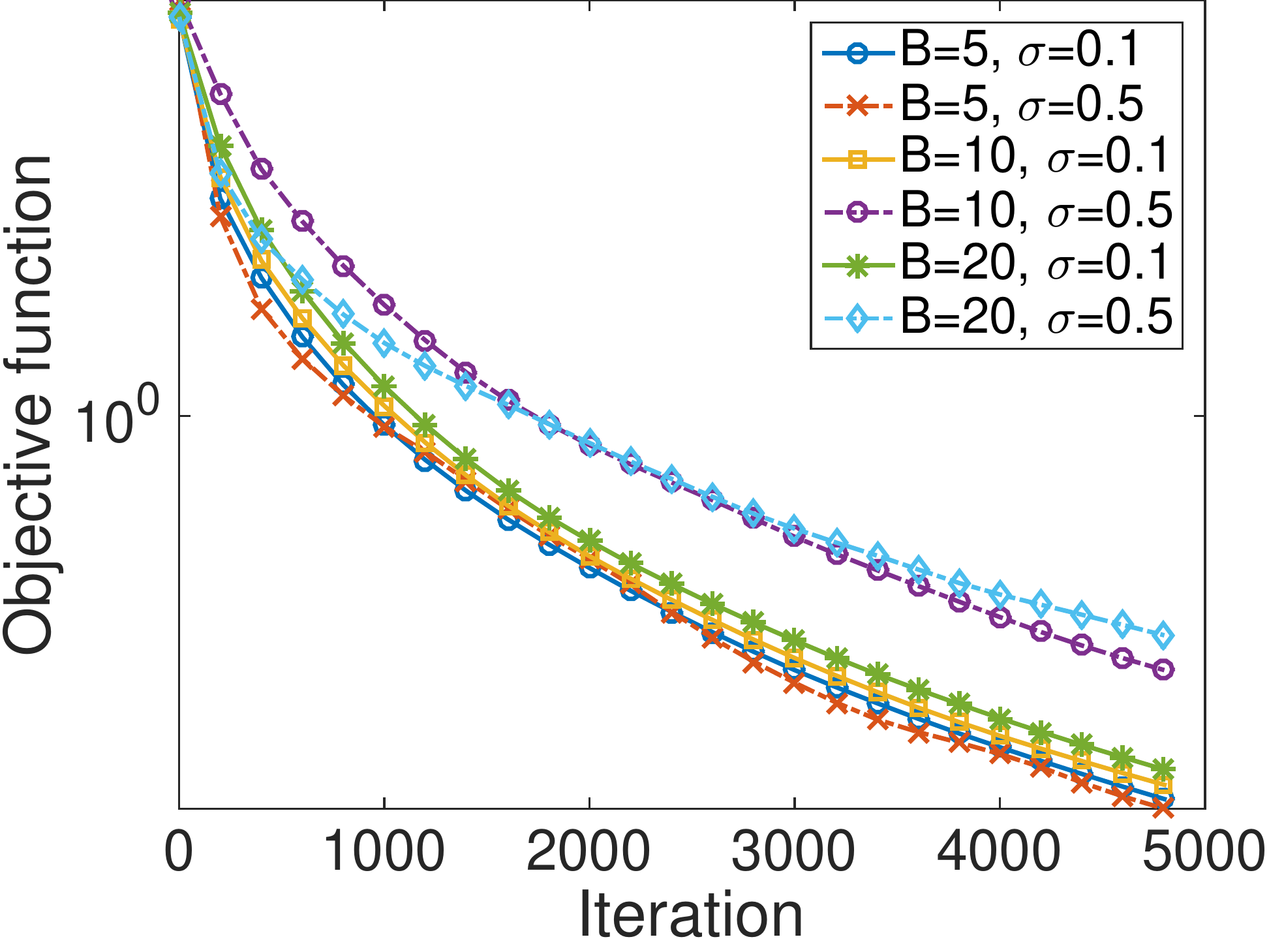}
\includegraphics[width=.45\textwidth]{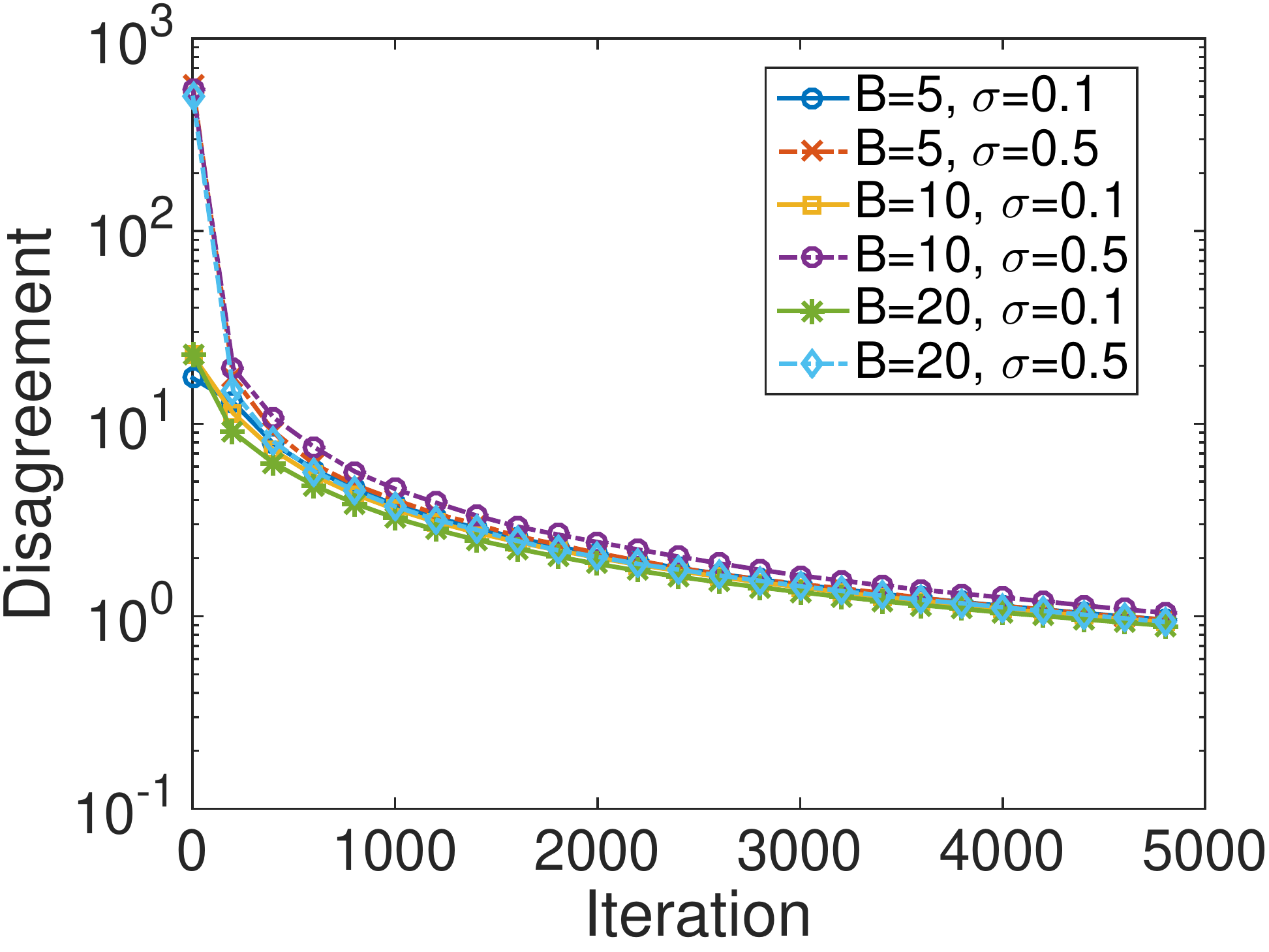}
\caption{Test on synthetic decentralized least-squares (top), robust least-squares (middle),
and logistic regression (bottom) for different
levels of delay $B$ and standard deviation in stochastic
gradient $\sigma$. Left: objective function $f(z(T))-f^*$ versus
iteration number $T$, where $f^*=f(x^*)$ is the optimal
value. Right: disagreement $\sum_{i=1}^m \|y_i(T)-z(T)\|^2$ versus
iteration number $T$.}
\label{fig:synthetic}
\end{figure}

We also compared the performance of decentralized gradient descent method
with and without delay and stochasticity in the gradients.
In this test, we synthesized networks and data in the same way as
in the decentralized least squares test above.
In addition, we plotted the result of $\tau_i(t)=0$ for all $i=1,\dots,m$ and 
$\sigma=0$ is for comparison.
These results are shown in the top row of Figure \ref{fig:compare},
The objective function value (top left) and disagreement (top right)
both decay sightly faster when there are no delay and stochastic 
error as shown in Figure \ref{fig:compare}, which is within expectations.
We further tested the performance when the network size varies. 
In this experiment, we used four 2D lattice networks, with sizes $m=5^2,10^2,15^2,20^2$.
The size of $x$ and $A_i$ at each node are the same as before.
The objective function value (middle left) and disagreement (middle right)
both decays, while it appears that network with smaller size decays faster, as shown in Figure \ref{fig:compare}.
{To demonstrate effectiveness of asynchronous consensus, we 
applied EXTRA \cite{Shi:2015a}, a state-of-the-arts synchronous decentralized consensus optimization method,
to the same data generated in decentralized least squares problem with network size $m=100$ and 
$\sigma=0$ (no stochastic error in gradients).
We draw computing times of these 100 nodes as
independent random variables between $[.001,.500]$ms every gradient evaluation. The synchronous 
algorithm EXTRA needs to wait for the slowest node to finish computation and then start a new iteration,
whereas in the asynchronous algorithm \eqref{eqn:algupd_node} the nodes communicate
with neighbors every 0.01ms using updates obtained by delayed gradients .
We plotted the objective function $f(z(T))-f^*$ and disagreement  $\sum_{i=1}^m \|y_i(T)-z(T)\|^2$
versus running time in the bottom row of Figure \ref{fig:compare},
which show that the asynchronous updates can be more time efficient by not waiting 
for slowest node in each iteration.}

\begin{figure}[t!]
\centering
\includegraphics[width=.45\textwidth]{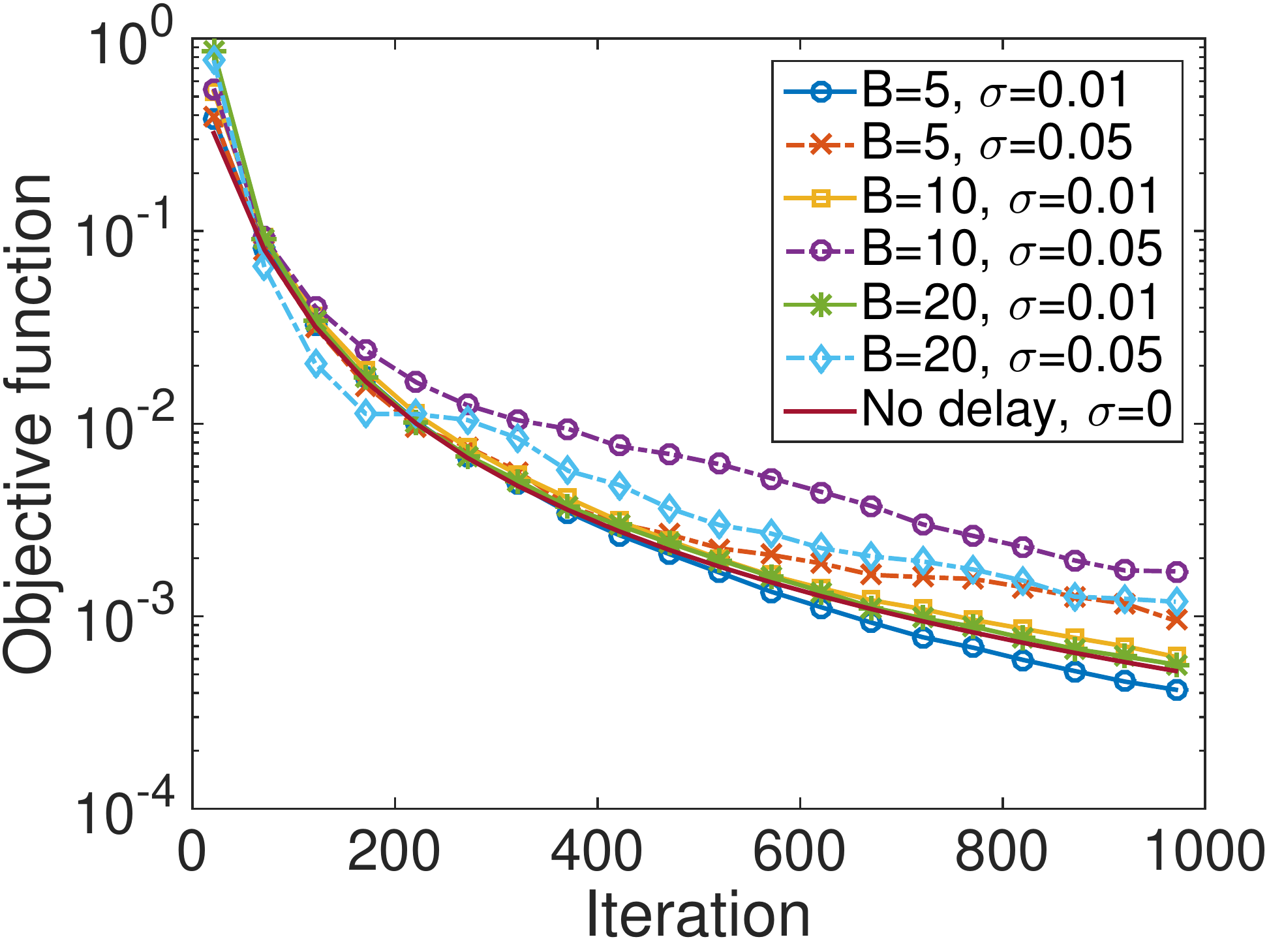}
\includegraphics[width=.45\textwidth]{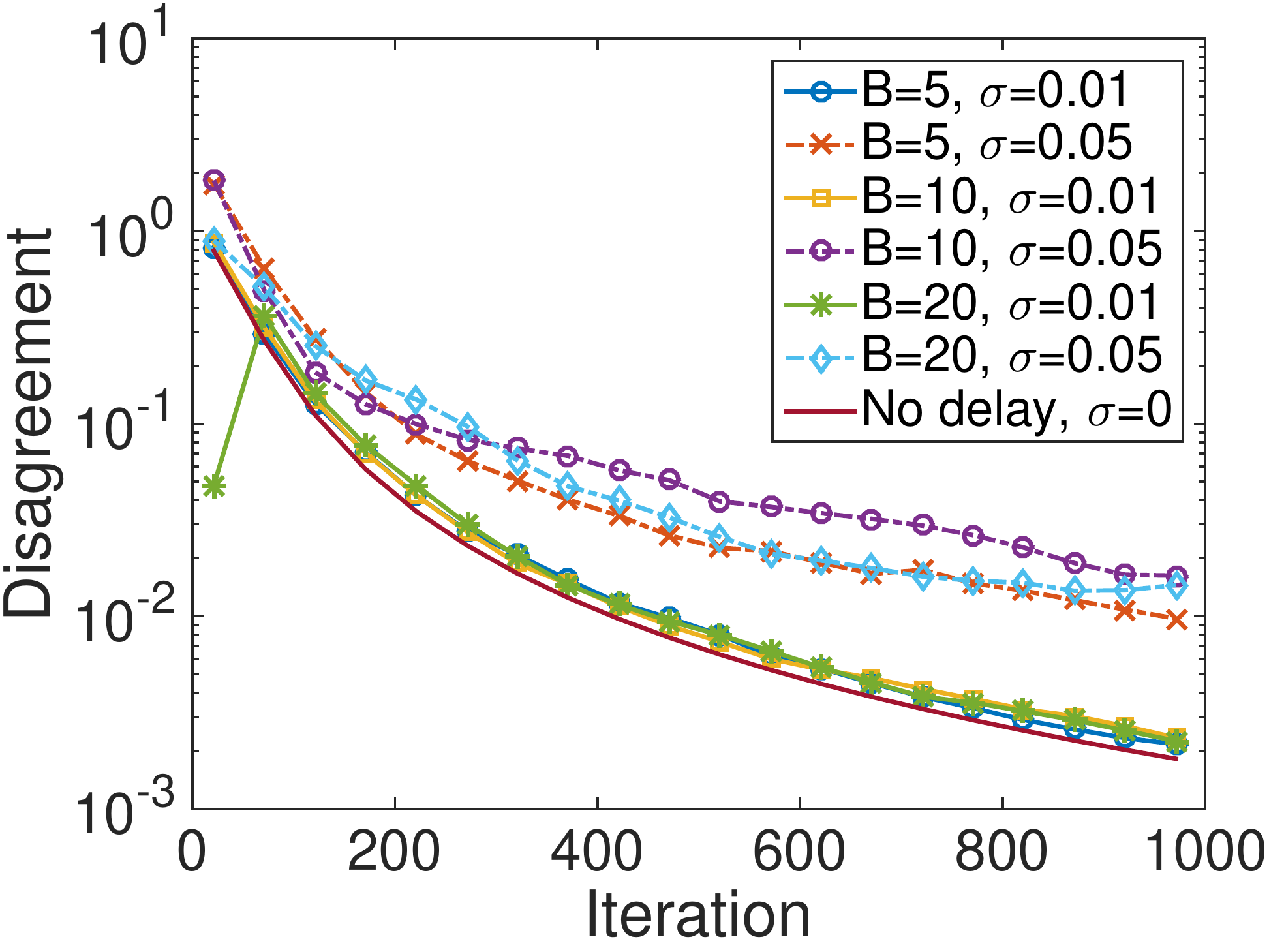}
\includegraphics[width=.45\textwidth]{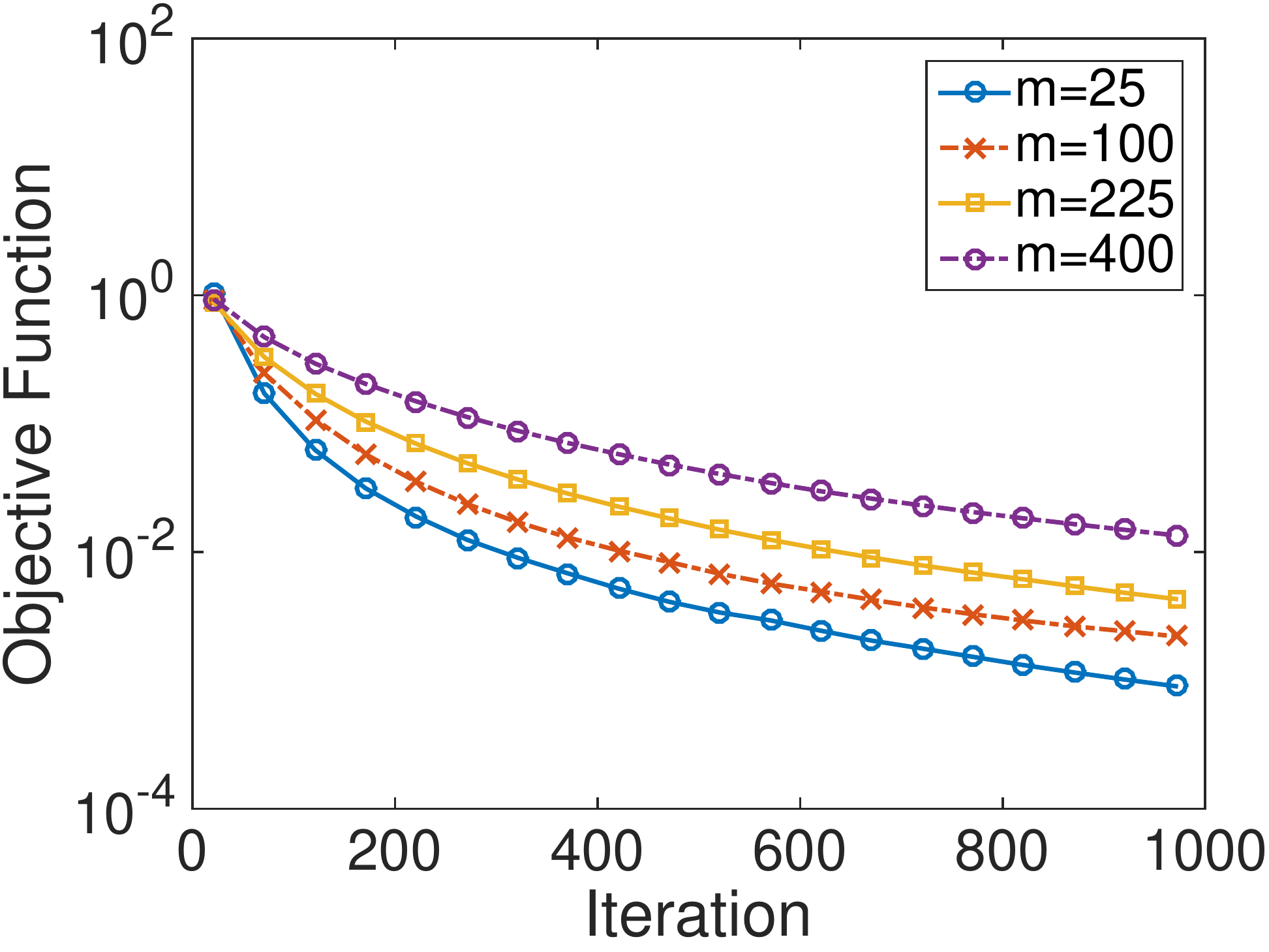}
\includegraphics[width=.45\textwidth]{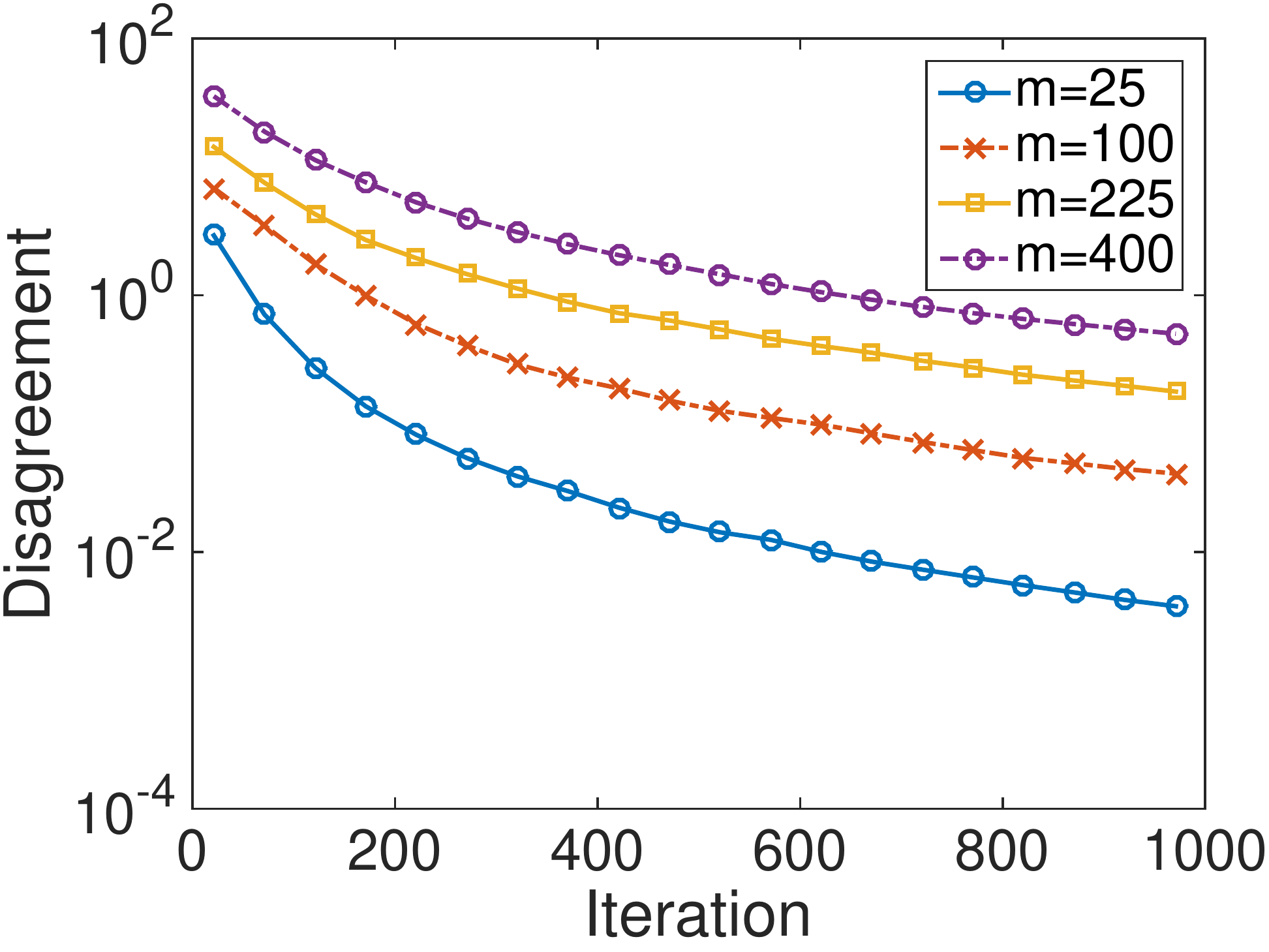}
\includegraphics[width=.45\textwidth]{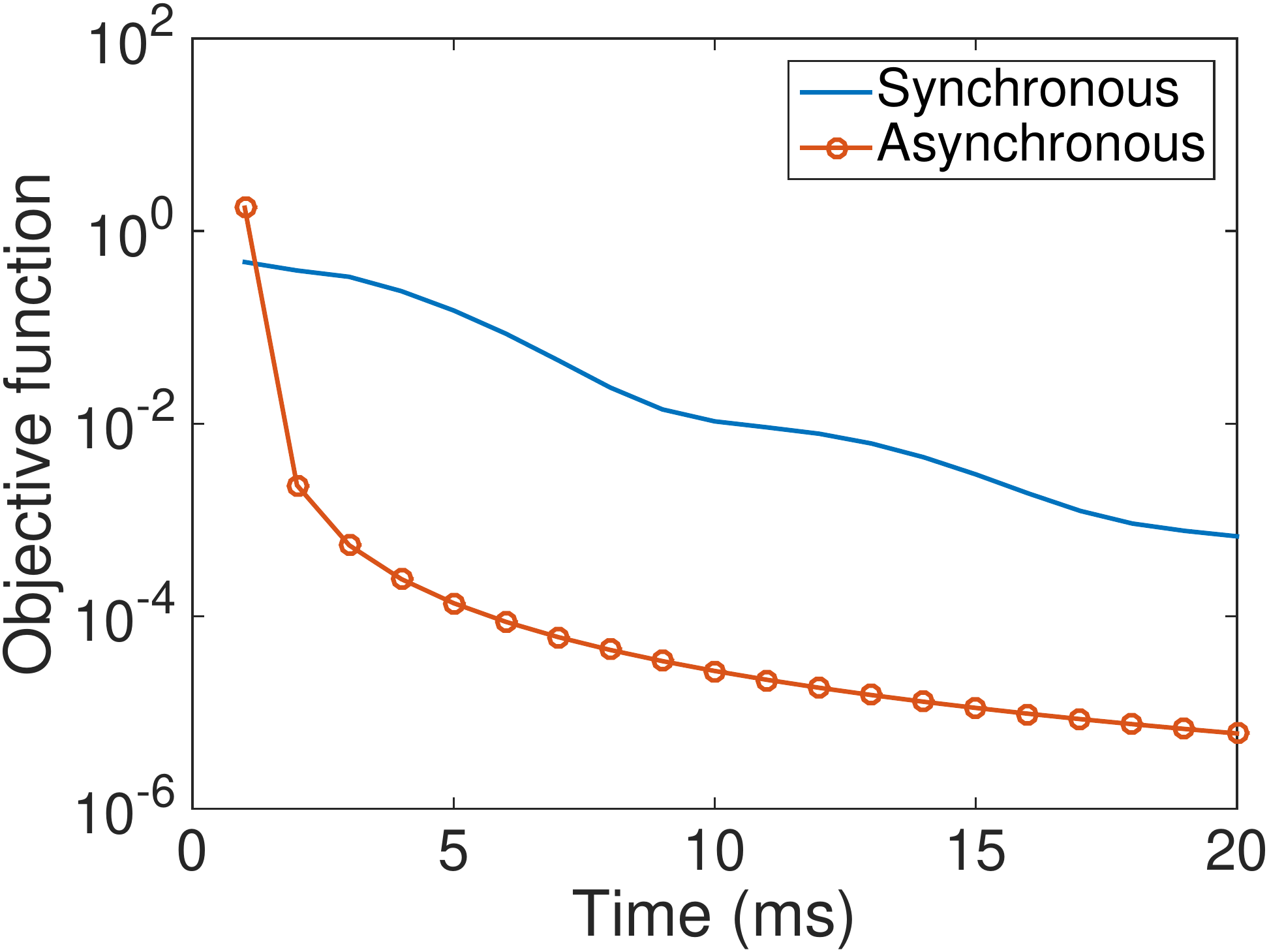}
\includegraphics[width=.45\textwidth]{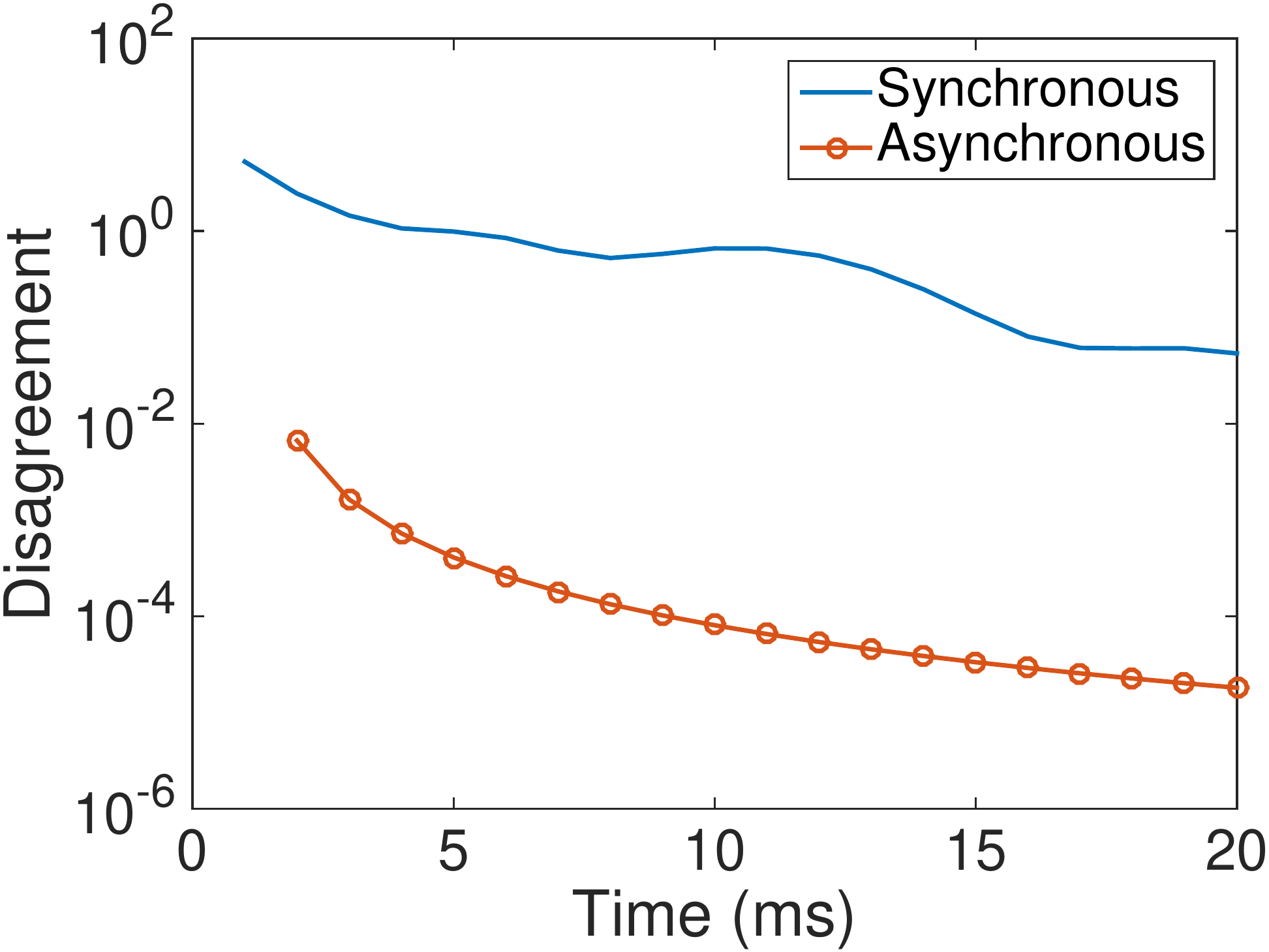}
\caption{Test on synthetic decentralized least-squares with and without delay/stochasticity (top)
and varying network size (bottom).
Left: objective function $f(z(T))$ versus
iteration number $T$. 
Right: disagreement $\sum_{i=1}^m \|y_i(T)-z(T)\|^2$ versus iteration number $T$.}
\label{fig:compare}
\end{figure}

\subsection{Test on real data}
We apply algorithm \eqref{eqn:algupd_node} to seismic tomography
where the data is collected and then processed by the nodes (sensors) 
in a wireless sensor network. In brief, underground seismic activities (such as earthquakes)
generate acoustic waves (we use P-wave here) which travel through
the materials and are detected by the sensors placed on the ground.
An explanatory picture of seismic tomography using a sensor network is shown in Figure \ref{fig:seismic_tomography}.
After data preprocessing, sensor $i$ obtains a matrix $A_i\in\mathbb{R}^{p_i\times n}$ 
and a vector $b_i\in\mathbb{R}^{p_i}$, and hence an objective $f_i(x)=(1/2)\|A_ix-b_i\|^2$
for $i=1,\dots,m$.
Here $(A_i)_{kl}$, the $(k,l)$-th entry of matrix $A_i$, is the distance that the wave generated by 
$k$-th seismic activity travels through pixel $l$, for $k=1,\dots,p_i$ ($p_i$ is the total number of 
seismic activities) and $l=1,\dots,n$ ($n$ is the total number of pixels
in the image), and $(b_i)_k$, the $k$-th component of $b_i$, is the total time that 
the wave travels from the source of $k$-th seismic activity to the sensor $i$.
Then $x_l$, the $l$-th component of $x\in\mathbb{R}^n$, represents the unknown ``slowness''
(reciprocal of the velocity of the traveling wave) at that location (pixel) $l$. The sensors then collaboratively solve for the image $x$ that minimizes the sum of their objective functions, under the constraint that only neighbor nodes may communicate during the computation process, since wireless signal transmission can only occur within a limited geographical range.
Once $x$ is reconstructed from $\min_xf(x)=\sum_{i=1}^mf_i(x)$, 
the material (e.g., rock, sand, oil, or magma) at each pixel $l$ can be identified by the value of $x_l$.
\begin{figure}[t!]
\centering
\includegraphics[width=.45\textwidth]{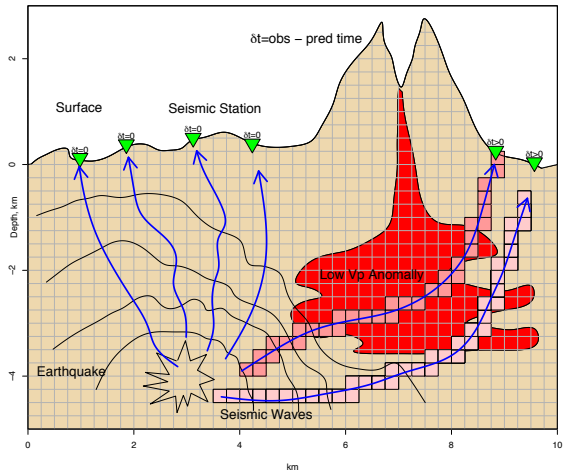}
\caption{Seismic tomography of an active volcano using wireless sensor network.
When there is a seismic activity (e.g., an earthquake) happens underground,
its acoustic waves (blue solid curves with arrows) travel to the ground surface and are detected by the
sensors (green triangles). Then the sensors communicate wirelessly
to reconstruct the entire image, where each square (tan, pink or red) represents
a pixel of the image $x\in\mathbb{R}^n$.}
\label{fig:seismic_tomography}
\end{figure}
%
%

The first dataset consists of a simple and connected network $G$ with $m=32$ nodes
where each node has $3$ neighbors, and
$A_i\in\mathbb{R}^{p_i\times n}$ and $b_i\in\mathbb{R}^{p_i}$
where the number of seismic events is $p_i=512$ and 
the size of a 2D image $x$ to be reconstructed is $n=64^2=4096$.
Since the matrix by stacking all $A_i$ is still underdetermined, we 
employ an objective function with Tikhonov regularization as
$f_i(x)=(1/2)(\|A_ix-b_i\|^2+\mu\|x\|^2)$ at each node $i$
where $\mu$ is set to $0.1$.
Note that more adaptive regularizers of $x$, such as $\ell_1$ and total variation (TV)
which result in a nonsmooth objective function, will be explored in future research.
We apply algorithm \eqref{eqn:algupd} with 
bound $B$ of delays set to $5$, $10$, and $20$ and
standard deviation $\sigma$ of stochastic gradient to $0.5$ and $0.05$.
We run our algorithm using step size $\alpha(t)=1/(2L+2\eta\sqrt{t})$ with 
$\eta$
that minimizes the constant of $1/\sqrt{T}$ term in Theorem \ref{thm:z_opt}.
The objective function $f(z(T))$ and disagreement 
$\sum_{i=1}^m \|y_i(T)-z(T)\|^2$ versus the iteration number $T$ are plotted in
the top row of Figure \ref{fig:real}, where convergence of both 
quantities can be observed.

The second seismic dataset contains a connected
network $G$ of size $m=50$ where each node has $3$ neighbors,
and matrices $A_i\in\mathbb{R}^{p_i\times n}$ and $b_i\in\mathbb{R}^{p_i}$
where $p_i=800$ and the size of 3D image $x$ to be reconstructed is $n=32^3=32768$.
We use the same objective function with Tikhonov regularization as before
with $\mu=0.01$. Other parameters are set the same as in the previous
test on a 2D seismic image. The settings for $B$ and $\sigma$ remain the same.
The objective function $f(z(T))$ and disagreement 
$\sum_{i=1}^m \|y_i(T)-z(T)\|^2$ versus the iteration number $T$ are plotted in
the middle row of Figure \ref{fig:real}, where similar convergence behavior can be observed.

The last seismic dataset consists of a connected
network $G$ of size $m=10$ where the average node degree is $5$,
and matrices $A_i\in\mathbb{R}^{p_i\times n}$ and $b_i\in\mathbb{R}^{p_i}$
where $p_i=1,816$ and the size of 3D image $x$ to be reconstructed is 
$n=160\times 200\times 24=768,000$.
In this test, we employ objective 
$f_i(x)=(1/2)(\|A_ix-b_i\|^2+\mu\|Dx\|^2)$ where 
$\mu=0.1$ and $D$ is the discrete gradient operator. 
Other parameters are set the same as in the previous
two seismic datasets. The bound $B$ of delay is set to $4$, $8$, and $16$, 
and standard deviation of stochastic gradient $\sigma$ is set to 1e-4 and
5e-4. 
The objective function $f(z(T))$ and disagreement 
$\sum_{i=1}^m \|y_i(T)-z(T)\|^2$ versus the iteration number $T$ are plotted in
the last row of Figure \ref{fig:real}.
The reconstructed image is displayed in the right panel of Figure \ref{fig:mshimage}.
By comparing with the solution obtained by centralized LSQR solver (left),
we can see the image is faithfully reconstructed on a decentralized network with
delayed stochastic gradients.

\begin{figure}[t]
\centering
\includegraphics[width=.4\textwidth]{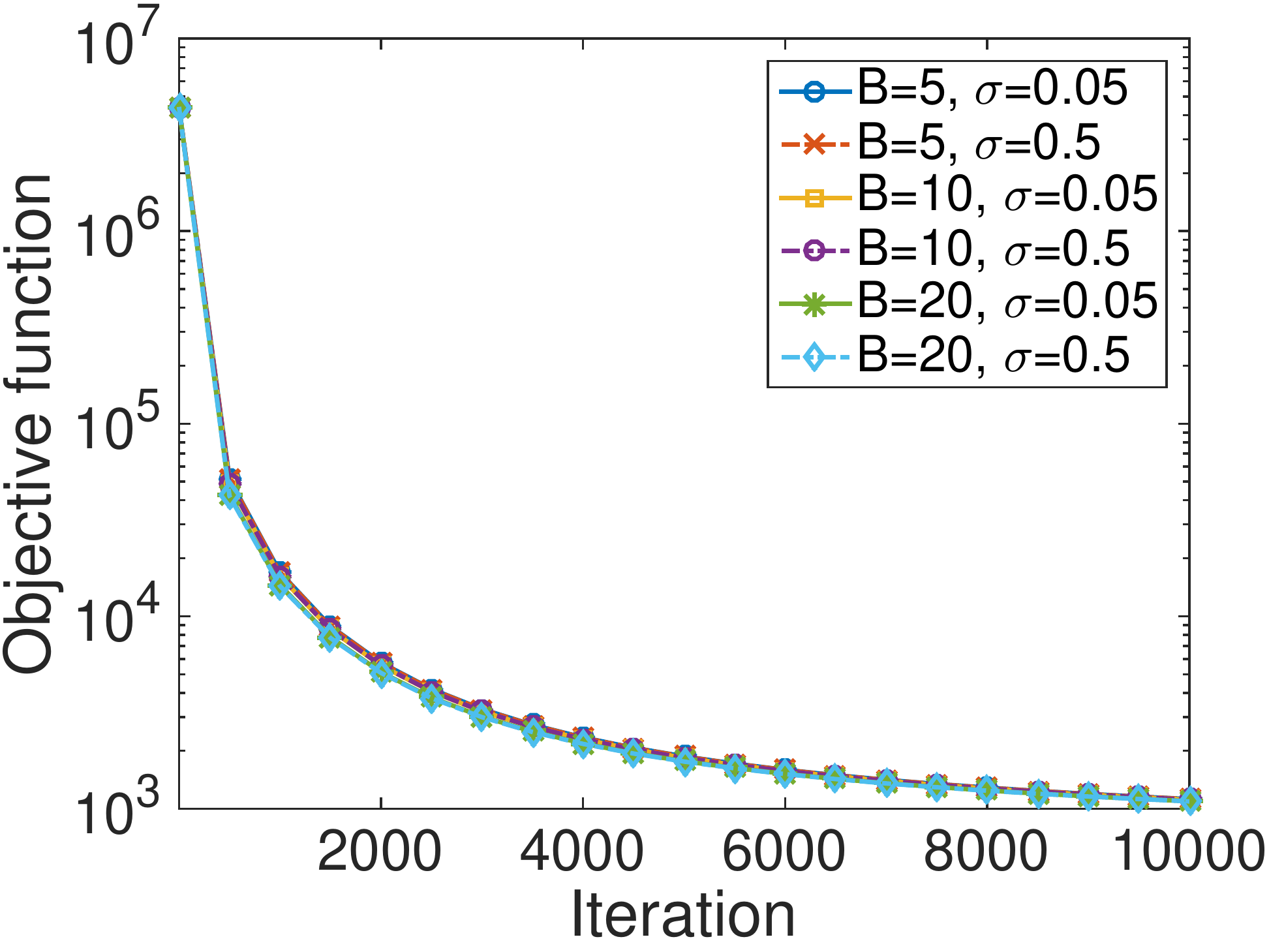}
\includegraphics[width=.4\textwidth]{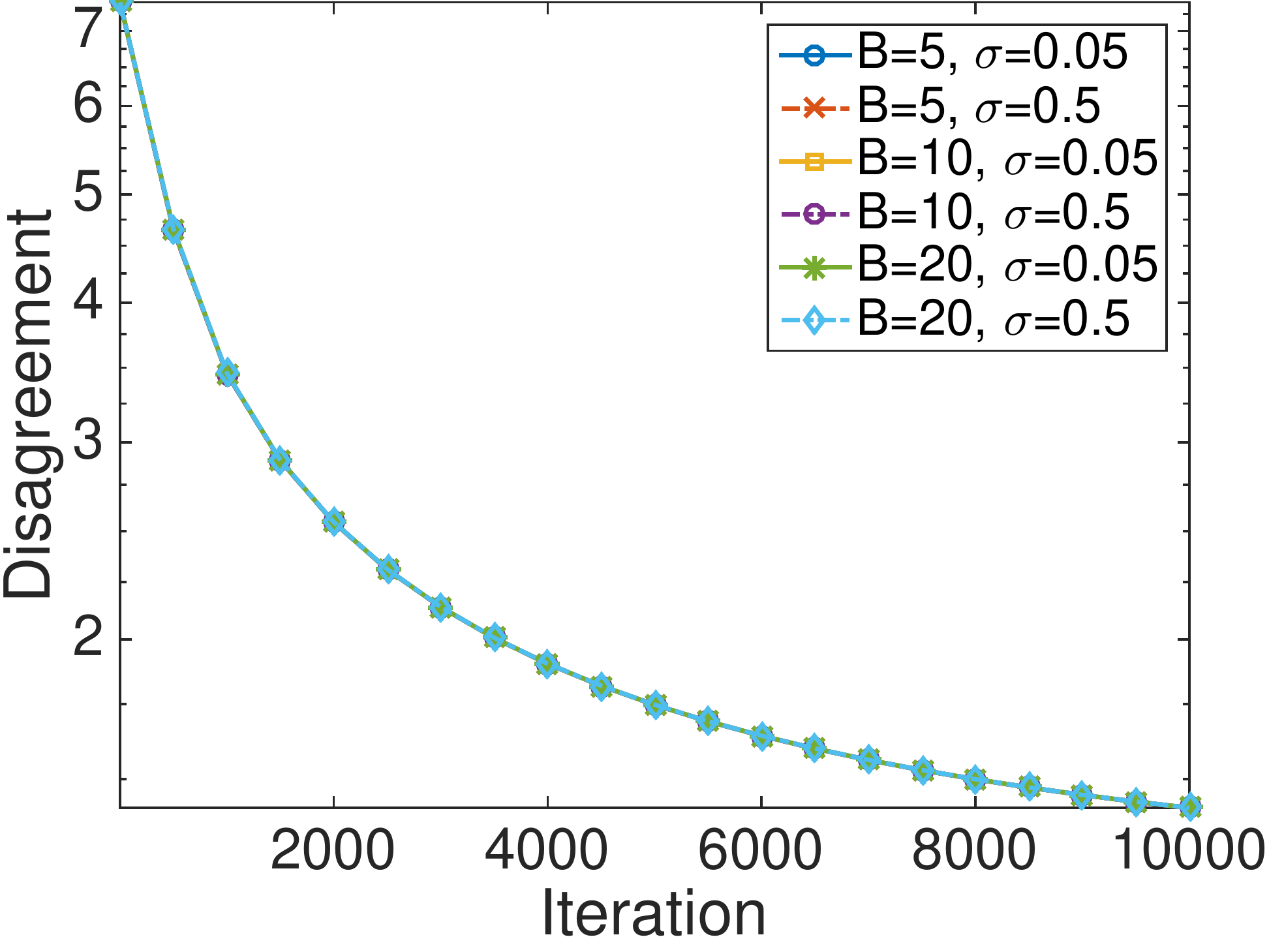}
\includegraphics[width=.4\textwidth]{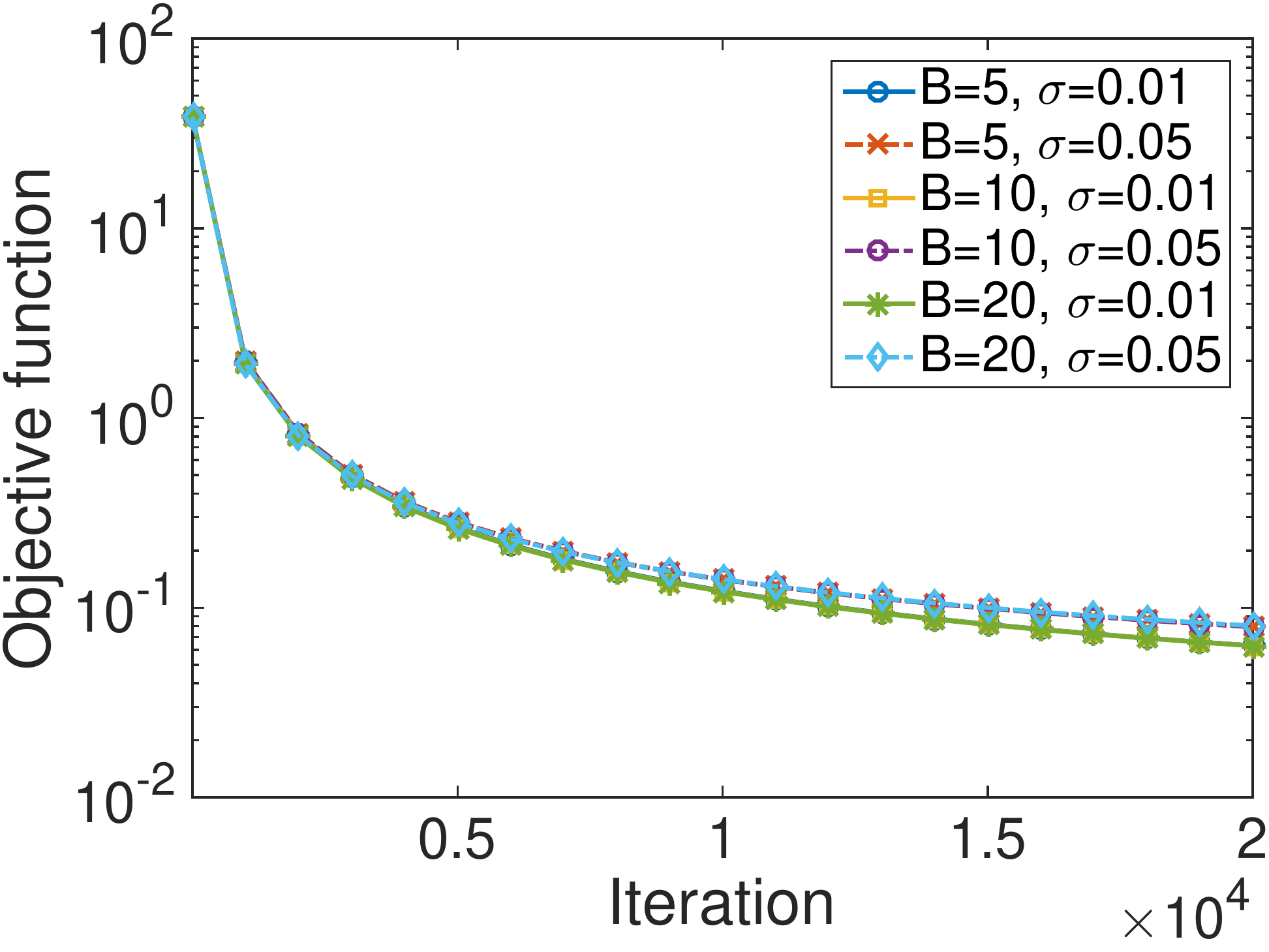}
\includegraphics[width=.4\textwidth]{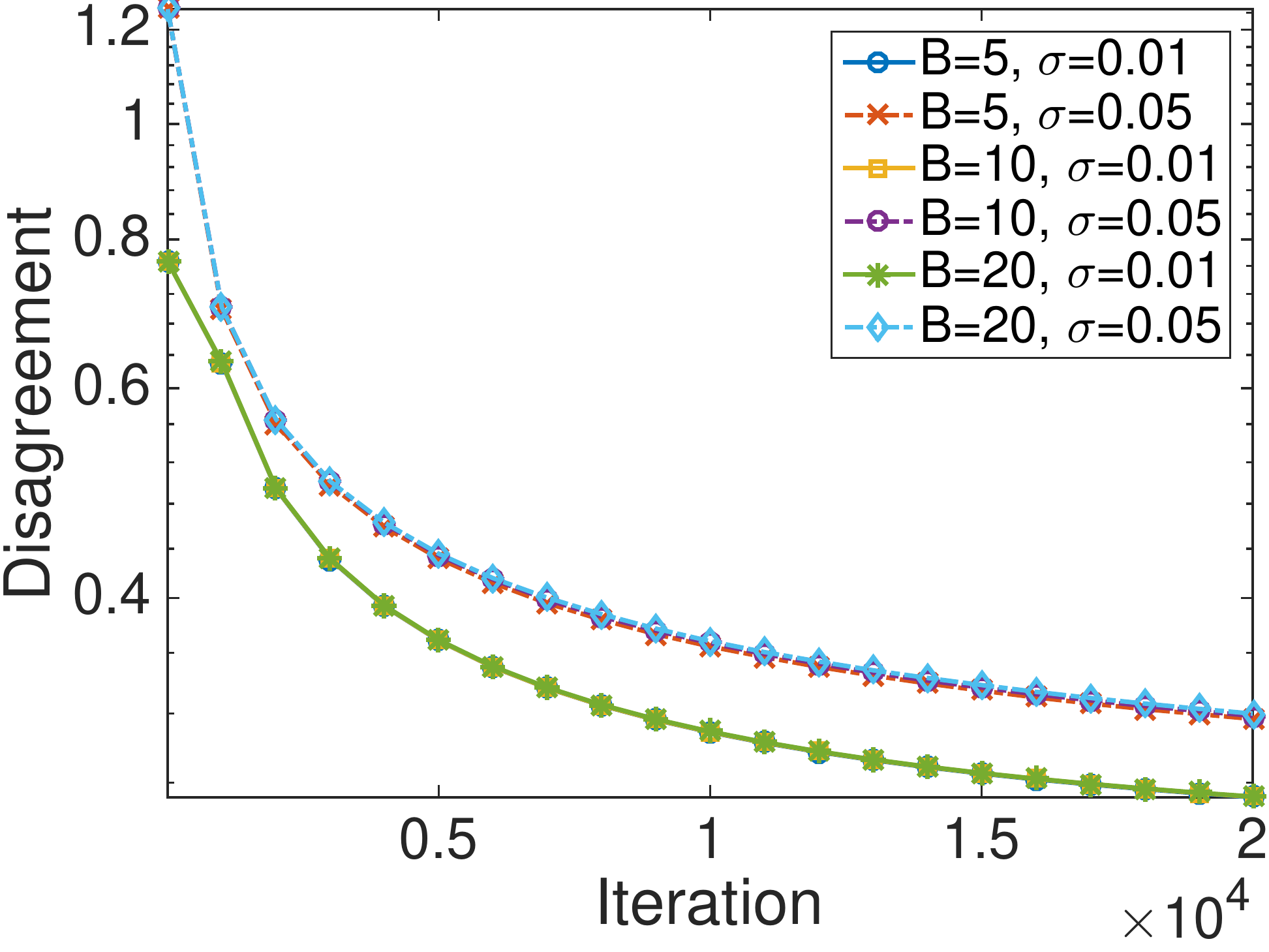}
\includegraphics[width=.4\textwidth]{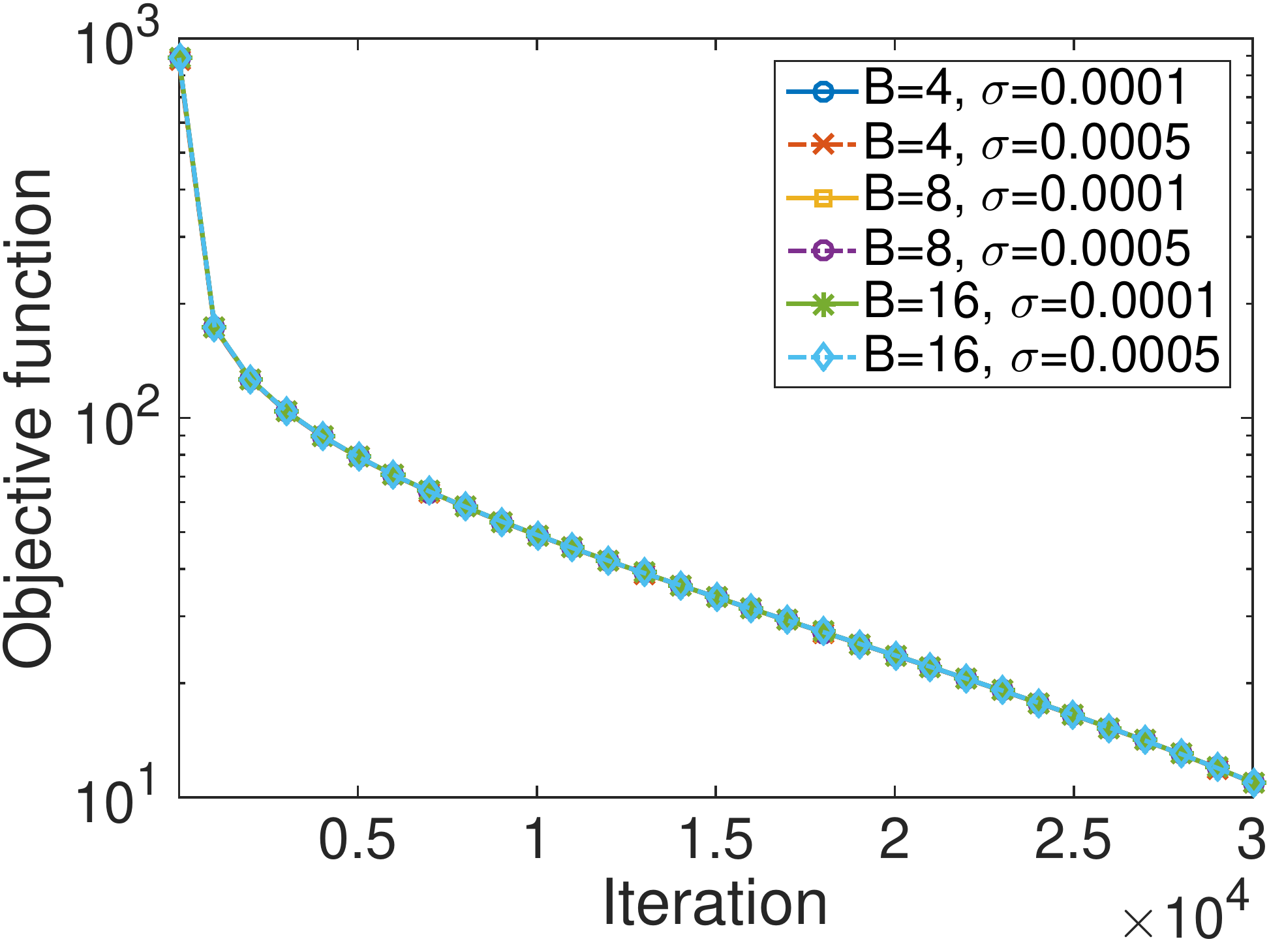}
\includegraphics[width=.4\textwidth]{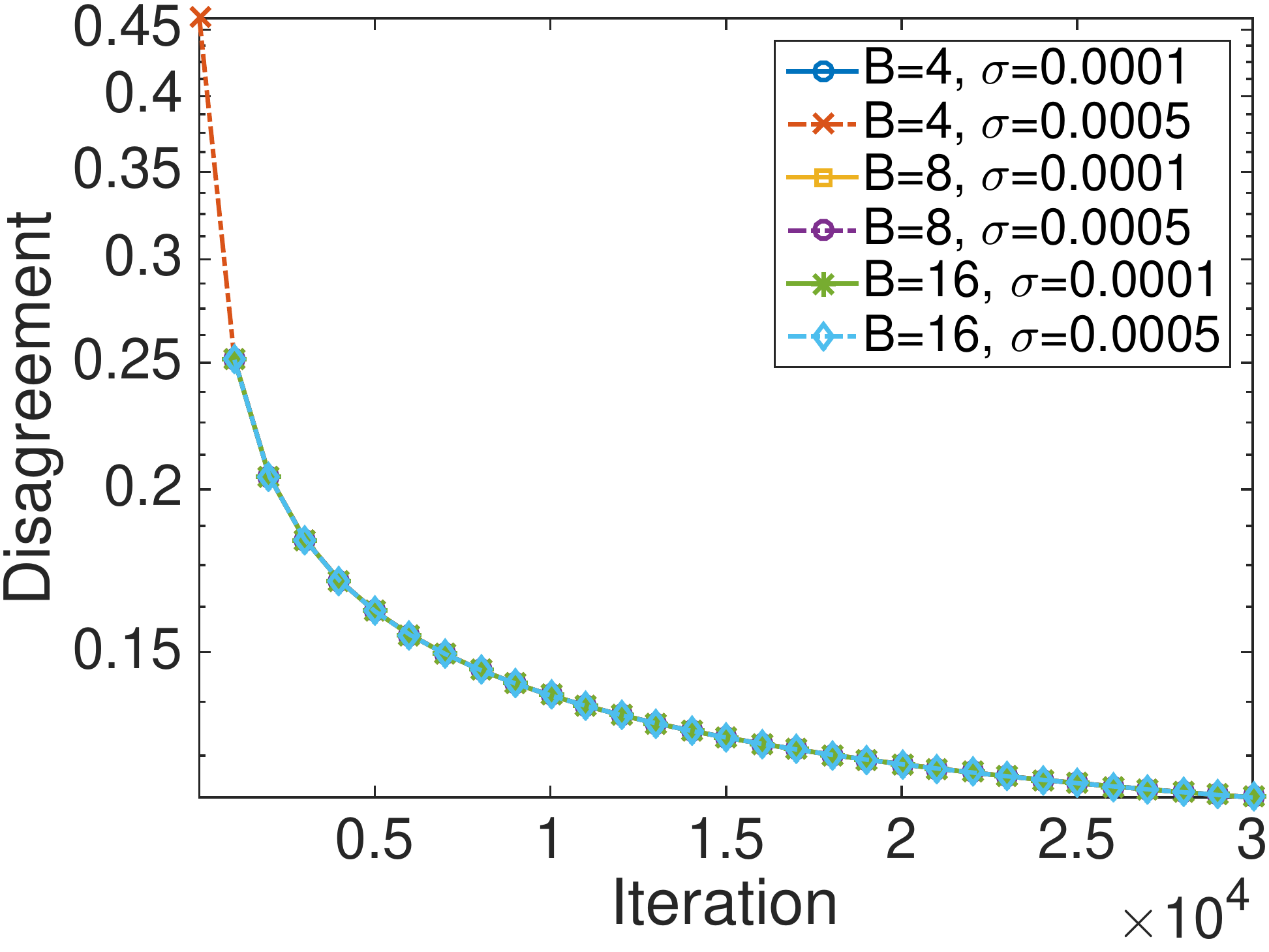}
\caption{Tests on real seismic image reconstruction problems
with $2D$ image with $n=64^2$ (top), $3D$ image with $n=32^3$ (middle),
and $3D$ image with $n=160\times 200\times 24$ (bottom) for different
levels of delay $B$ and standard deviation in stochastic
gradient $\sigma$. Left: objective function $f(z(T))$ versus
iteration number $T$. Optimal value indicates $f^*:=f(x^*)$. 
Right: disagreement $\sum_{i=1}^m \|y_i(T)-z(T)\|^2$ versus
iteration number $T$.}
\label{fig:real}
\end{figure}

\begin{figure}[t]
\centering
\includegraphics[width=.45\textwidth]{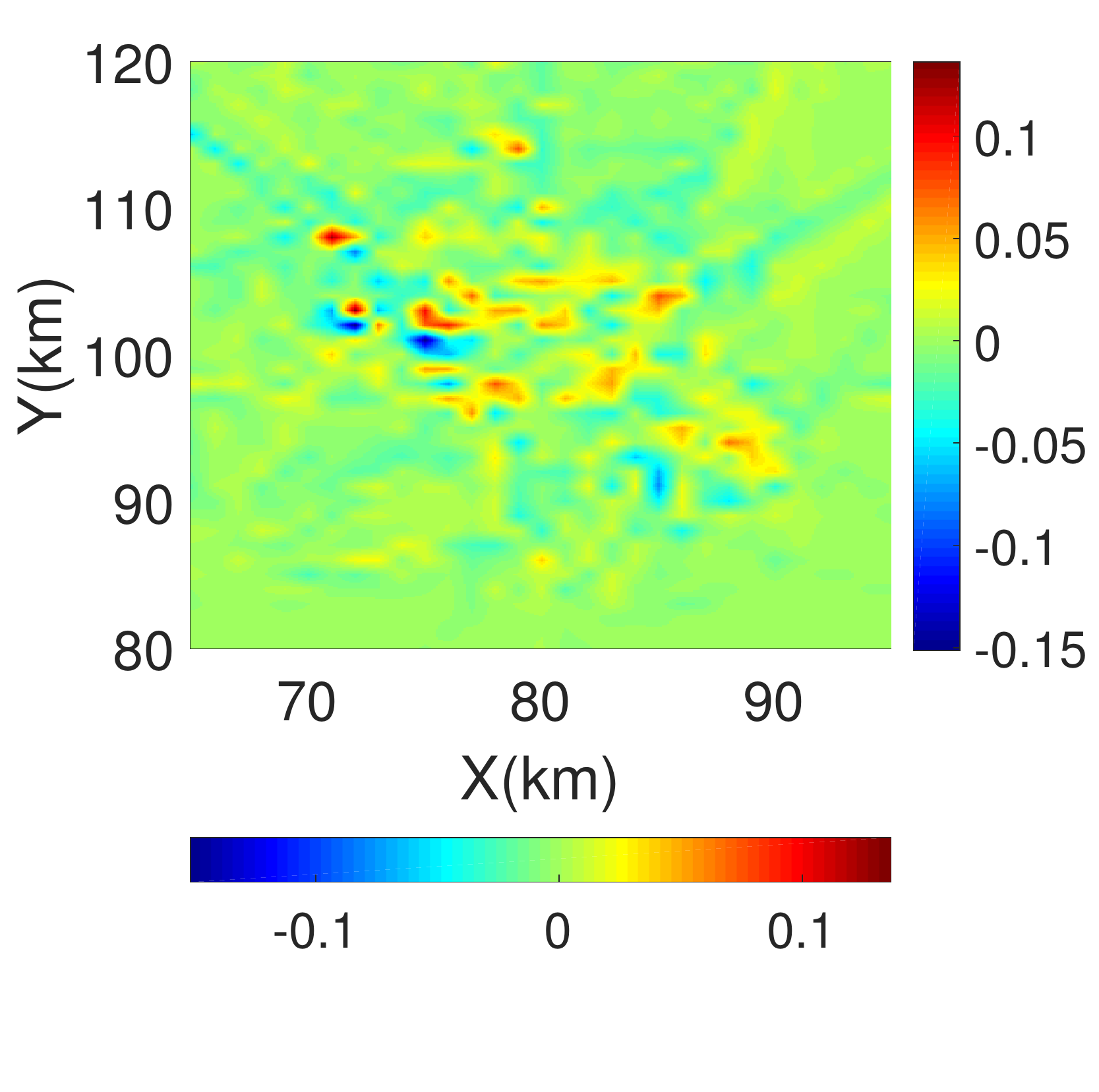}
\includegraphics[width=.45\textwidth]{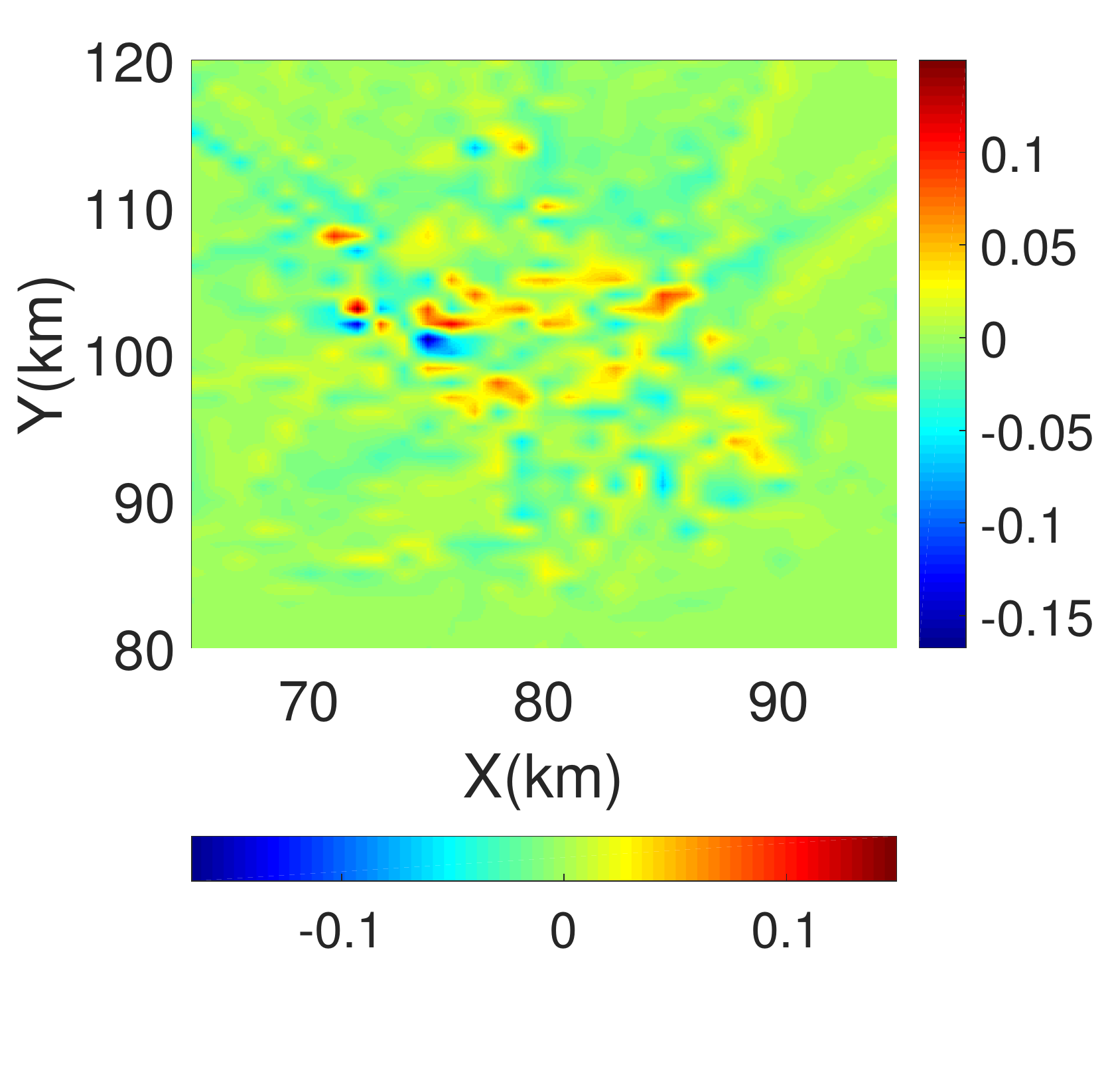}
\caption{Cross section of a reconstructed 3D seismic image generated by a centralized LSQR solver (left)
and decentralized algorithm with delayed stochastic gradient \eqref{eqn:algupd_node}
with $B=4$ and $\sigma=10^{-4}$ (right).}
\label{fig:mshimage}
\end{figure}

\section{Concluding Remarks}\label{sec:conclusion}
In this paper, we analyzed the convergence of decentralized delayed
stochastic gradient descent method as in \eqref{eqn:algupd_node}
for solving the consensus optimization \eqref{eqn:deccon}.
The algorithm takes into consideration that the nodes in the network 
privately hold parts of the objective 
function and collaboratively solve for the consensus
optimal solution
of the total objective while they can only communicate with their immediate neighbors,
as well as the delays of gradient information in real-world networks
where the nodes cannot be fully synchronized. 
We show that, as long as the random delays are bounded in expectation
and a proper diminishing step size policy is employed,
the iterates generated by the decentralized gradient decent method 
converge to a consensus solution.
Convergence rates of both objective and consensus were derived.
Numerical results on a number of synthetic and real data were also presented
for validation.

%
%
%
%
%

\appendix
\section{Proof of Lemma \ref{lemma:proj}}\label{app:proof_proj}
\begin{proof}
It suffices to show that for any fixed $R>0$ and $X=\{x\in\mathbb{R}^m:\|x\|_\infty\leq R\}$,
there is 
\begin{equation}
\|(I-J)\proj_X(x)\|\leq \|(I-J)x\|
\end{equation}
for all $x\in\mathbb{R}^m$.
Note that for $x=(x_1,x_2,\dots,x_m)^{\top}\in\mathbb{R}^m$, there is
$$\|(I-J)x\|^2=\sum_{i=1}^m(x_i-\overline{x})^2$$
where 
$\overline{x}:=(1/m)\sum_{i=1}^mx_i$. 
We only need to show that if all $\{x_i:x_i<-R\}$ are projected to $-R$ then $\|(I-J)x\|^2$ will reduce.
Without loss of generality, suppose $x_1,\dots,x_\ell<-R$ and $x_{\ell+1},\dots,x_m\ge-R$, and let 
denote the means of these two groups by
\begin{equation}
\mu_1:=\frac{1}{\ell}\sum_{i=1}^{\ell}x_i < -R 
\quad \mbox{ and }\quad \mu_2:=\frac{1}{m-\ell}\sum_{i=\ell+1}^mx_i\geq -R.
\end{equation}
Then we have $\overline{x}=(\ell\mu_1+(m-\ell)\mu_2)/m$, and 
\begin{align}&\quad \|(I-J)x\|^2 \nonumber \\
&=\sum_{i=1}^m(x_i-\overline{x})^2=\sum_{i=1}^m(x_i-\frac{\ell\mu_1+(m-\ell)\mu_2}{m})^2\nonumber\\
&=\sum_{i=1}^{\ell}(x_i-\frac{\ell\mu_1+(m-\ell)\mu_2}{m})^2
+\sum_{i=\ell+1}^m(x_i-\frac{\ell\mu_1+(m-\ell)\mu_2}{m})^2\nonumber\\
&=\sum_{i=1}^\ell\left((x_i-\mu_1)+\frac{m-\ell}{m}(\mu_1-\mu_2)\right)^2
+\sum_{i=\ell+1}^m\left((x_i-\mu_2)+\frac{\ell}{m}(\mu_2-\mu_1)\right)^2 \label{eqn:sumfoil} \\
&=\sum_{i=1}^{\ell}(x_i-\mu_1)^2+2\frac{m-\ell}{m}(\mu_1-\mu_2)\sum_{i=1}^{\ell}(x_i-\mu_1)
+\ell\left(\frac{m-\ell}{m}\right)^2(\mu_1-\mu_2)^2 \nonumber \\
\;\;\;\;&\ +\sum_{i=\ell+1}^m(x_i-\mu_2)^2+2\frac{\ell}{m}(\mu_2-\mu_1)\sum_{i=\ell+1}^m(x_i-\mu_2)
+(m-\ell)\left(\frac{\ell}{m}\right)^2(\mu_2-\mu_1)^2 \nonumber
\end{align}
After $x_1,\cdots,x_{\ell}$ are projected to $-R$ 
(and $x_{\ell+1},\dots,x_m$ remain unchanged), their mean is updated from $\mu_1$ to $-R$
for all $i=1,\dots,\ell$, and $\mu_2-\mu_1(\geq0)$ reduces to $\mu_2+R(\geq0)$.
Therefore, the first, third, and sixth terms in the right hand side of \eqref{eqn:sumfoil} are decreased, 
the second and fifth terms remain zero, and the fourth term remains unchanged. 
Thus $\|(I-J)x\|$ reduces after projection to 
$[-R,\infty)^m$. 
A similar argument implies that projecting $\{x_i:x_i>R\}$ to $R$ will further reduce $\|(I-J)x\|^2$. 
Therefore projecting $x$ to $X$, i.e., projecting to $[-R,\infty)^m$ and then $(-\infty,R]^m$, reduces $\|(I-J)x\|^2$.
\end{proof}

\section{Proof of Lemma \ref{lemma:seqrate}}\label{app:proof_seqrate}
\begin{proof}
First, we note that 
\begin{equation}\label{eqn:sum_alpha}
\sum_{s=0}^{t-1}\alpha(s)\lambda^{t-1-s}=\alpha(0)\lambda^{t-1}+\alpha(1)\lambda^{t-2}+\sum_{s=2}^{t-1}\alpha(s)\lambda^{t-1-s}
\end{equation}
which means that the rate is upper bounded by the last sum on the right side above since the 
first two tend to $0$ at a linear rate $\lambda\in(0,1)$.

Note that for all $w\in[s-1,s]$ we have 
$\frac{1}{\sqrt{s}}\leq\frac{1}{\sqrt{w}}$ and $\lambda^{-s}\leq\lambda^{-(w+1)}$ since $\lambda\in(0,1)$, and therefore
\begin{equation}
\alpha(s)\lambda^{t-1-s}=\frac{\lambda^{t-1-s}}{c_1+c_2\sqrt{s}}\leq\frac{\lambda^{t-1}\lambda^{-s}}{c_2\sqrt{s}}\leq\frac{\lambda^{t-1}\lambda^{-(w+1)}}{c_2\sqrt{w}}=\frac{\lambda^{t-2-w}}{c_2\sqrt{w}}.
\end{equation}
This inequality allows us to bound the last term on right hand side of \eqref{eqn:sum_alpha} by
\begin{equation}\label{eqn:dominating_sum_alpha}
\sum_{s=2}^{t-1}\alpha(s)\lambda^{t-1-s}\leq\sum_{s=2}^{t-1}\int_{s-1}^s\frac{\lambda^{t-2-w}}{c_2\sqrt{w}}dw
=\int_{1}^{t-1}\frac{\lambda^{t-2-w}}{c_2\sqrt{w}}dw=\frac{2\lambda^{t-2}}{c_2} I_t,
\end{equation}
where $I_t$ is defined by
\begin{align}
I_t:=\frac{1}{2}\int_1^{t-1}\frac{\lambda^{-w}}{\sqrt{w}}dw.
\end{align}
By changing of variable $w=u^2$, we obtain $I_t=\int_1^{\sqrt{t-1}}\lambda^{-u^2}du$. Now we have that
%
\begin{align}
I_t^2 & = \int_1^{\sqrt{t-1}}\int_1^{\sqrt{t-1}}\lambda^{-(u^2+v^2)}\ dudv
 = \int_1^{\sqrt{t-1}}\int_1^{\sqrt{t-1}} e^{-(u^2+v^2)\log\lambda}\ dudv \nonumber \\
& \leq \int_0^{\sqrt{t}}\int_0^{\sqrt{t}}e^{-(u^2+v^2)\log\lambda}\ dudv 
 = 2\int_0^{\pi/4}\int_0^{\sqrt{t}/\cos\theta}e^{-\rho^2\log\lambda}\rho \ d\rho d\theta \label{eqn:doubleint} \\
& =-\frac{1}{\log\lambda}\int_0^{\pi/4}(e^{-t\log\lambda/\cos^2(\theta)}-1)\ d\theta 
 < -\frac{1}{\log\lambda}\int_0^{\pi/4}e^{-t\log\lambda/\cos^2(\theta)}\ d\theta \nonumber
\end{align}
where the third equality comes from changing to a polar system with the substitutions 
$u=\rho\cos\theta$ and $v=\rho\sin\theta$.
Note that $\cos^{-2}(\theta)-( 1+4\theta/\pi)\leq0$ for all 
$\theta\in[0,\pi/4]$ since $\cos^{-2}(\theta)-1-4\theta/\pi$ is convex with respect to $\theta$ and vanishes
at $\theta=0$ and $\theta=\pi/4$. 
Therefore 
\begin{align}
I_t^2 \leq -\frac{1}{\log\lambda}\int_0^{\pi/4}e^{-t\log\lambda(1+4\theta/\pi)}d\theta
\leq \frac{\pi\lambda^{-2t}}{4t(\log\lambda)^2}.
\end{align}
Hence the sum in \eqref{eqn:dominating_sum_alpha} is bounded by
\begin{equation}
\sum_{s=2}^{t-1}\alpha(s)\lambda^{t-1-s}\leq\frac{2\lambda^{t-2}}{c_2}I_t
\leq \frac{2\lambda^{t-2}}{c_2}\frac{\sqrt{\pi}\lambda^{-t}}{2\sqrt{t}\log(\lambda^{-1})}
=\frac{\sqrt{\pi}\lambda^{-2}}{c_2\sqrt{t}\log(\lambda^{-1})}
\end{equation}
which completes the proof.
\end{proof}

\section{Proof of Corollary \ref{cor:adjrate}}\label{app:proof_adjrate}
\begin{proof}
According to the update \eqref{eqn:alg} or equivalently \eqref{eqn:algupd}, we have
\begin{align}\label{eqn:adjbound}
\Ex[\|x(t+1)-x(t)\|]
& = \Ex[\|\proj_{\calX} [Wx(t)-\alpha(t)g(t-\tau(t))] - x(t)\|] \nonumber \\
& \leq \Ex[\|(I-W)x(t)+\alpha(t)g(t-\tau(t)))\|] \\
& \leq \Ex[\|(I-W)x(t)\|]+\alpha(t)\Ex[\|g(t-\tau(t)))\|] \nonumber
\end{align}
where we used the facts that $x(t)\in \calX$ and that projection $\proj_{\calX}$ is non-expansive
in the first inequality.
Note that $WJ=J$ and hence $I-W=(I-W)(I-J)$, we have 
\begin{align*}
\Ex[\|(I-W)x(t)\|] = \Ex[\|(I-W)(I-J)x(t)\|] \leq \Ex[\|(I-J)x(t)\|] \leq \frac{\sqrt{\pi m}G\lambda^{-2}}{\eta\sqrt{t}\log(\lambda^{-1})}
\end{align*}
where we used the fact that $\|I-W\|\leq 1$ in the first inequality and applied Theorem \ref{thm:cssrate}
to obtain the second inequality. 

Furthermore, we have by the definition of $\alpha(t)$ that
\begin{align}\label{eqn:alphaGrate}
\|\alpha(t)g(t-\tau(t))\| \leq \sqrt{m}\alpha(t)G=\frac{\sqrt{m}G}{2(L+\eta\sqrt{t})}\leq \frac{\sqrt{m}G}{2\eta\sqrt{t}}.
\end{align}
Applying the two inequalities above to \eqref{eqn:adjbound}
yields \eqref{eqn:adjrate}.
\end{proof}

\section{Proof of Corollary \ref{cor:delayrate}}\label{app:proof_delayrate}
\begin{proof}
We first define $\taubar(t):=\max\{\tau_i(t):1\leq i\leq m\}$. 
{Then there is $\Ex[|\taubar(t)|^2]
\leq \Ex[\sum_{i=1}^m|\tau_i(t)|^2]\leq m B^2$.} 
Without loss of generality, we assume that $0\leq \taubar(t)\leq t-2$
for every given $t$, i.e., we consider the convergence rate when every node 
has successfully computed their own gradient at least twice. Then we obtain that
\begin{align}
& \quad\, \Ex[\|x(t)-x(t-\tau(t))\|] \nonumber \\
& \leq \Ex\sbr[3]{ \sum_{s=1}^{\taubar(t)} \|x(t-s+1)-x(t-s)\| } \leq C\Ex\sbr[3]{ \sum_{s=1}^{\taubar(t)} \frac{1}{\sqrt{t-s}} } \nonumber \\
& = C\Ex\sbr[3]{ \sum_{s=t-\taubar(t)}^{t-1} \frac{1}{\sqrt{s}} }\leq C\Ex\sbr[3]{\int_{t-\taubar(t)-1}^{t-1} \frac{1}{\sqrt{s}}ds} \label{eq:gap_x_tau} \\
& = 2C\Ex\sbr{ \sqrt{t-1}-\sqrt{t-\taubar(t)-1} }\leq 2C\Ex\sbr[3]{\frac{\taubar(t)}{\sqrt{t-1}+\sqrt{t-\taubar(t)-1}}} \nonumber \\
& \leq C\Ex\sbr[3]{\frac{\taubar(t)}{\sqrt{t-\taubar(t)-1}}} \nonumber
\end{align}
where we used triangle inequality to obtain the first inequality, applied Corollary \ref{cor:adjrate} to obtain the
second inequality, and used the fact that $\taubar(t)\geq0$ to obtain the last inequality above.
Note that there is
\begin{align}
\Ex\sbr{\frac{\taubar(t)}{\sqrt{t-\taubar(t)-1}}}
&=\sum_{s=0}^{\lfloor t/2\rfloor-1}\frac{s}{\sqrt{t-s-1}}\prob(\taubar(t)=s)+ \sum_{s=\lfloor t/2\rfloor}^{t-2}\frac{s}{\sqrt{t-s-1}}\prob(\taubar(t)=s) \nonumber\\
& \leq\frac{\sqrt{2}}{\sqrt{t}}\sum_{s< t/2}s\prob(\taubar(t)=s)+(t-2)\sum_{s\geq t/2}\prob(\taubar(t)=s) \label{eq:ex_frac}\\
& \leq\frac{\sqrt{2m}B}{\sqrt{t}}+\frac{4mB^2(t-2)}{t^2} \leq\frac{\sqrt{2m}B}{\sqrt{t}}+\frac{4mB^2}{t}= O\del{\frac{1}{\sqrt{t}}} \nonumber
\end{align}
where we used the fact that $\sqrt{t-s-1}\geq \sqrt{t/2}$ if $0\leq s\leq \lfloor t/2 \rfloor-1$
and $s/\sqrt{t-s-1}\leq t-2$ if $\lfloor t/2\rfloor\leq s\leq t-2$ to obtain the first
inequality, and $\sum_{s< t/2} s \prob(\taubar(t)=s)\leq \Ex[\taubar(t)]\leq \sqrt{\Ex[\taubar(t)^2]}=\sqrt{m}B$
and $\sum_{s\geq t/2}\prob(\taubar(t)=s)=\prob(\taubar(t)\geq t/2)\leq (4/t^2)\Ex[\taubar(t)^2]\leq 4mB^2/t^2$
(by Chebyshev's inequality) in the second inequality.
In particular, it is easy to verify that, when $t\geq 8mB^2$, there is 
$\sqrt{2m}B/\sqrt{t}\geq 4mB^2/t$
and hence $\Ex\sbr[2]{\frac{\taubar(t)}{\sqrt{t-\taubar(t)-1}}}\leq \frac{2\sqrt{2m}B}{\sqrt{t}}$.
Combining \eqref{eq:gap_x_tau} and \eqref{eq:ex_frac} completes the proof.
\end{proof}

\section{Proof of Lemma \ref{lemma:sum_inprod}}\label{app:proof_sum_inprod}
\begin{proof}
By Cauchy-Schwarz inequality, we have that
\begin{align*}
& \quad\, \sum_{t=1}^T \langle \nabla f(x(t))-\nabla f(x(t-\tau(t))),x(t+1)-x^*\rangle \\
& \leq \sum_{t=1}^T \| \nabla f(x(t))-\nabla f(x(t-\tau(t))\| \| x(t+1)-x^*\|
\end{align*}
Note that $\|x(t+1)-x^*\|^2=\sum_{i=1}^{m} \| x_i(t+1)-x^*\|^2 \leq mn(2R)^2$ due 
to the bound of $X=\{x\in\mathbb{R}^n:\|x\|_\infty\leq R\}$,
and $\| \nabla f(x(t))-\nabla f(x(t-\tau(t))\|^2 = \sum_{i=1}^m \| \nabla f_i(x_i(t))-\nabla f_i(x_i(t-\tau(t))\|^2
\leq \sum_{i=1}^m L_i\| x_i(t)-x_i(t-\tau(t))\|^2 \leq L \|x(t)-x(t-\tau(t))\|^2\leq 2\sqrt{2m}CB/\sqrt{t}$
due to Corollary \ref{cor:delayrate}. Therefore, we obtain
\begin{align*}
\sum_{t=1}^T \langle \nabla f(x(t))-\nabla f(x(t-\tau(t))),x(t+1)-x^*\rangle
\leq 8\sqrt{2nLT}mRCB  \end{align*}
by using the fact that $\sum_{t=1}^T 1/\sqrt{t}\leq 2\sqrt{T}$. 
This completes the proof.
\end{proof}

\section{Proof of Theorem \ref{thm:y_opt}}\label{app:proof_y_opt}
\begin{proof}
We first note that there is
\begin{align}\label{eqn:objdiff}
& \qquad f(x(t+1))-f(x^*) = \sum_{i=1}^m \left(f_i(x_{i}(t+1))-f_i(x^*) \right) \nonumber\\
& = \ \sum_{i=1}^m \left[f_i(x_{i}(t+1))-f_i(x_{i}(t))+f_i(x_{i}(t))-f_i(x^*) \right] \nonumber\\
& \leq \ \sum_{i=1}^m \left[\left\langle \nabla f_i(x_i(t)),x_{i}(t+1)-x_i(t) \right\rangle+\frac{L_i}{2}\|x_{i}(t+1)-x_i(t)\|^2  \right. \\
&\qquad\qquad \left.+\left\langle \nabla f_i(x_i(t)),x_{i}(t)-x^* \right\rangle \right] \nonumber\\
& \leq \ \sum_{i=1}^m \left[\left\langle \nabla f_i(x_i(t)),x_{i}(t+1)-x^* \right\rangle + 
\frac{L_i}{2}\|x_{i}(t+1)-x_i(t)\|^2 \right] \nonumber \\
& \leq \ \left\langle \nabla f(x(t)),x(t+1)-x^* \right\rangle + 
\frac{L}{2}\|x(t+1)-x(t)\|^2  \nonumber \\
& \leq \ \left\langle g(t-\tau(t)),x(t+1)-x^* \right\rangle + \left\langle \nabla f(x(t))-g(t-\tau(t)),x(t+1)-x^* \right\rangle \nonumber \\
& \qquad\qquad +\frac{L}{2}\|x(t+1)-x(t)\|^2  \nonumber
\end{align}
%
where we used the $L_i$-Lipschitz continuity of $\nabla f_i$ and
convexity of $f_i$ to obtain the first inequality.
Note that $x(t+1)$ is obtained by \eqref{eqn:alg} as
\begin{align}
x(t+1)= & \argmin_{x \in \calX} \left\{ \langle g(t-\tau(t)),x\rangle + \frac{1}{2\alpha(t)}\|x-Wx(t)\|^2\right\} \\
=& \argmin_{x \in \calX} \left\{ \left\langle g(t-\tau(t))+\frac{1}{\alpha(t)}(I-W)x(t),x\right\rangle + \frac{1}{2\alpha(t)}\|x-x(t)\|^2
\right\}\nonumber 
\end{align}
Therefore, the optimality of $x(t+1)$ in \eqref{eqn:alg} and strong convexity of the objective function
in \eqref{eqn:alg} imply that
\begin{align}\label{eqn:inner_opt}
&\left\langle g(t-\tau(t)),x(t+1)-x^* \right\rangle \nonumber \\
\leq &-\frac{1}{\alpha(t)}\left\langle (I-W)x(t),x(t+1)-x^* \right\rangle \\
& +\frac{1}{2\alpha(t)} \left[\|x^*-x(t)\|^2-\|x(t+1)-x(t)\|^2-\|x^*-x(t+1)\|^2\right]. \nonumber
\end{align}
Furthermore, we note that $\bfone\in\mathrm{Null}(I-W)$ and $x^*$ is consensual, hence we have
\begin{align}\label{eqn:cons_inner_prod}
&\ -\frac{1}{\alpha(t)}\langle (I-W)x(t),x(t+1)-x^*\rangle \nonumber\\
= &\  -\frac{1}{\alpha(t)}\langle (I-W)(x(t)-x^*),x(t+1)-x^*\rangle \nonumber\\
= &\ \frac{1}{2\alpha(t)}\left(\|x(t)-x(t+1)\|_{I-W}^2-\|x(t)-x^*\|_{I-W}^2-\|x(t+1)-x^*\|_{I-W}^2\right) \\
\leq & \ \frac{1}{4\alpha(t)}\|x(t)-x(t+1)\|_{I-W}^2\nonumber
\end{align}
where we have used the fact that
\[\| x(t)-x(t+1)\|_{I-W}^2 \leq 2(\|x(t)-x^*\|_{I-W}^2+\|x(t+1)-x^*\|_{I-W}^2 )\]
to obtain the inequality above. We also have that
\[\|x(t)-x(t+1)\|_{I-W}^2\leq \|x(t)-x(t+1)\|^2\]
with which we can further bound \eqref{eqn:cons_inner_prod} as
\begin{align*}
-\frac{1}{\alpha(t)} \langle (I-W)x(t),x(t+1)-x^*\rangle 
\leq \frac{1}{4\alpha(t)}\|x(t)-x(t+1)\|^2.
\end{align*}
Now applying the inequality above and \eqref{eqn:inner_opt} to \eqref{eqn:objdiff},
and taking sum of $t$ from $1$ to $T$, we get
\begin{align}\label{eqn:sumobjdiff}
\sum_{t=1}^{T} f(x(t+1))- T f(x^*) & \leq \ \sum_{t=1}^{T}  \frac{1}{2\alpha(t)}\left(\|x(t)-x^*\|^2-\|x(t+1)-x^*\|^2\right)  \nonumber \\
& \quad +\sum_{t=1}^{T} \left(\frac{L}{2}-\frac{1}{4\alpha(t)}\right)\|x(t)-x(t+1)\|^2  \\
& \quad + \sum_{t=1}^{T} \left\langle \nabla f(x(t))- g(t-\tau(t)), x(t+1)-x^*\right\rangle. \nonumber
\end{align}



Note that the running average $y(T)=(1/T)\sum_{t=1}^{T} x(t+1)$ satisfies
$f(y(T))\leq (1/T)\sum_{t=1}^{T} f(x(t+1))$ due to the convexity of all $f_i$.
Therefore, together with \eqref{eqn:sumobjdiff} 
and the definition of $\alpha(t)$, we have
\begin{align}\label{eqn:sumobj}
& \ T [f(y(T))- f(x^*)] \nonumber \\
\leq & \ \sum_{t=1}^{T}  \left[\frac{1}{2\alpha(t)} \left(\|x(t)-x^*\|^2-\|x(t+1)-x^*\|^2\right)
-\frac{\eta\sqrt{t}}{2}\|x(t)-x(t+1)\|^2 \right]\\
& \ + \sum_{t=1}^{T} \left\langle \nabla f(x(t))- g(x(t-\tau(t))), x(t+1)-x^*\right\rangle. \nonumber
\end{align}

Now, by taking expectation on both sides of \eqref{eqn:sumobj}, we obtain
\begin{align}\label{eqn:Esumobj}
T\Ex[f(y(T))- f(x^*)] \leq & \ \sum_{t=1}^{T}  \left[\frac{1}{2\alpha(t)} \left(e(t)-e(t+1)\right)
-\frac{\eta\sqrt{t}}{2}\Ex[\|x(t)-x(t+1)\|^2] \right] \nonumber\\
&+  8\sqrt{2nLT} mRCB \\
&+ \sum_{t=1}^{T} \Ex\left\langle \nabla f(x(t-\tau(t)))- g(t-\tau(t)), x(t+1)-x^*\right\rangle \nonumber
\end{align}
where we denoted $e(t):=\Ex[\|x(t)-x^*\|^2]$ for notation simplicity.

Now we work on the last sum of inner products on the right side of \eqref{eqn:Esumobj}.
First we observe that
\begin{align}\label{eqn:sumobjdiff_inprod_rhs2_tmp}
&\ \Ex \left\langle \nabla f(x(t-\tau(t)))- g(t-\tau(t)), x(t+1)-x^*\right\rangle \nonumber\\
=  &\  \Ex \left\langle \nabla f(x(t-\tau(t)))- g(t-\tau(t)), x(t-\tau(t))-x^*\right\rangle \\
&\qquad + \Ex \left\langle \nabla f(x(t-\tau(t)))- g(t-\tau(t)), x(t+1)-x(t-\tau(t))\right\rangle. \nonumber
\end{align}
Note that $g_i(t-\tau_i(t))$ is the stochastic gradient of node $i$ evaluated at iteration $t-\tau_i(t)$, 
and the stochastic error $g_i(t-\tau_i(t))-\nabla f_i(x_i(t-\tau_i(t)))$ 
is independent of $x_i(t-\tau_i(t))$. Therefore, we have
\begin{align}\label{eqn:Einprod1}
& \quad\, \Ex \left\langle \nabla f(x(t-\tau(t)))- g(t-\tau(t)), x(t-\tau(t))-x^*\right\rangle \\
& = \sum_{i=1}^m \Ex \left\langle \nabla f_i(x_i(t-\tau_i(t)))- g_i(t-\tau_i(t)), x_i(t-\tau_i(t))-x^*\right\rangle=0, \nonumber
\end{align}
since the stochastic gradients are unbiased.
Furthermore, by Young's inequality, we have
\begin{align}\label{eqn:Einprod2}
&\ \Ex \left\langle \nabla f(x(t-\tau(t)))- g(t-\tau(t)), x(t+1)-x(t-\tau(t))\right\rangle \nonumber \\
\leq &\  \frac{2}{\eta\sqrt{t}}\Ex [\| \nabla f(x(t-\tau(t)))- g(t-\tau(t)) \|^2] + \frac{\eta\sqrt{t}}{2}\Ex [\|x(t+1)-x(t)\|^2] \\
\leq &\  \frac{2m\sigma^2}{\eta\sqrt{t}} +\frac{\eta\sqrt{t}}{2}\Ex [\|x(t+1)-x(t)\|^2] \nonumber
\end{align}
where we used the fact that $\Ex [\| \nabla f(x(t-\tau(t)))- g(t-\tau(t)) \|^2]\leq m\sigma^2$ for all $t$.
Now applying \eqref{eqn:sumobjdiff_inprod_rhs2_tmp}, \eqref{eqn:Einprod1}
and \eqref{eqn:Einprod2} in \eqref{eqn:Esumobj}, we have
\begin{align}\label{eqn:TEx}
&\qquad T\Ex \left[f(y(T))- f(x^*)\right]  \nonumber \\
&\ \leq \sum_{t=1}^{T} \frac{1}{2\alpha(t)}\left(e(t)-e(t+1)\right)
+ 8\sqrt{2nLT} mRCB + \sum_{t=1}^{T} \frac{2m\sigma^2}{\eta\sqrt{t}} \\
& \ \leq \frac{e(1)}{2\alpha(1)}+\sum_{t=2}^{T} \frac{e(t)}{2}\left(\frac{1}{\alpha(t)}-\frac{1}{\alpha(t-1)}\right) + 8\sqrt{2nLT} mRCB + \sum_{t=1}^{T} \frac{2m\sigma^2}{\eta\sqrt{t}}  \nonumber 
\end{align}
where we note that $\alpha(t)$ is nonincreasing, and hence $\frac{1}{\alpha(t)}-\frac{1}{\alpha(t-1)}\geq0$ and
\begin{equation*}
\sum_{t=2}^{T} \frac{e(t)}{2}\left(\frac{1}{\alpha(t)}-\frac{1}{\alpha(t-1)}\right)
\leq \frac{\DX^2}{2}\sum_{t=2}^{T} \left(\frac{1}{\alpha(t)}-\frac{1}{\alpha(t-1)}\right) 
=\frac{\DX^2}{2}\del{\frac{1}{\alpha(T)}-\frac{1}{\alpha(1)}} 
\end{equation*}
where we used the fact that $e(t)=\Ex[\|x(t)-x^*\|^2]\leq \DX^2:=4mnR^2$ for all $t$.
Plugging this into \eqref{eqn:TEx}, dividing both sides by $T$, 
and using the fact that $\sum_{t=1}^{T}1/\sqrt{t}\leq 2\sqrt{T}$, we obtain
\eqref{eqn:y_opt}.
This completes the proof.
\end{proof}

%

\section*{Acknowledgments}
The authors would like to thank Dr.~WenZhan Song and his SensorWeb Research Laboratory
at University of Georgia for sharing the illustrative Figure \ref{fig:seismic_tomography} 
and 
the three test seismic tomography datasets in this paper. 

\bibliographystyle{abbrv}
\bibliography{library}

\end{document}